\numberwithin{equation}{section}
\theoremstyle{plain}
\newtheorem{theorem}{Theorem}[section]
\newtheorem{proposition}[theorem]{Proposition}
\newtheorem{propdef}[theorem]{Proposition-Definition}
\newtheorem{lemma}[theorem]{Lemma}
\theoremstyle{definition}
\newtheorem{definition}[theorem]{Definition}
\theoremstyle{remark}
\newtheorem{remark}[theorem]{Remark}
\newtheorem{example}[theorem]{Example}
\newcommand{\R}{\mathbb{R}}
\newcommand{\C}{\mathbb{C}}
\newcommand{\Z}{\mathbb{Z}}
\newcommand{\cover}{\mathfrak{U}}
\newcommand{\currs}{\mathcal{C}}
\newcommand{\forms}{\mathcal{E}}
\newcommand{\holo}{\mathcal{O}}
\newcommand{\opens}{\mathcal{U}}
\def\Homs{\mathop{\mathcal{H}\!\mathit{om}}\nolimits}
\def\Exts{\mathop{\mathcal{E}\!\mathit{xt}}\nolimits}
\renewcommand{\dbar}{\bar{\partial}}
\DeclareMathOperator{\codim}{codim}
\DeclareMathOperator{\End}{End}
\DeclareMathOperator{\Ext}{Ext}
\DeclareMathOperator{\Hom}{Hom}
\DeclareMathOperator{\id}{id}
\DeclareMathOperator{\im}{im}
\DeclareMathOperator{\supp}{supp}
\DeclareMathOperator{\coker}{coker}
\DeclareMathOperator{\rank}{rank}
\DeclareMathOperator{\depth}{depth}
\newcommand{\defeq}{\vcentcolon=}
\title[Different representations of Ext and residue currents]{An explicit isomorphism of different representations of the Ext functor using residue currents}
\author{Jimmy Johansson}
\author{Richard Lärkäng}
\address{Department of Mathematical Sciences,
Chalmers University of Technology and the University of Gothenburg,
Gothenburg SE-412 96, Sweden}
\email{jimmjoha@gmail.com, larkang@chalmers.se}
\date{October 7, 2024}
\begin{document}
\begin{abstract}
Let $\mathcal{F}$ be a coherent $\holo_X$-module over a complex manifold $X$, and let $G$ be a vector bundle on $X$. We describe an explicit isomorphism between two different representations of the global $\Ext$ groups $\Ext^k(\mathcal{F},G)$. The first representation is given by the cohomology of a twisted complex in the sense of Toledo and Tong, and the second one is obtained from the Dolbeault complex associated with $G$. A key tool that we introduce for explicitly describing this isomorphism is a residue current associated with a twisted resolution of $\mathcal{F}$.
\end{abstract}
\maketitle
\section{Introduction}
Let $X$ be a complex manifold, and let $\cover$ be a Leray cover. If $G$ is a vector bundle over $X$, then the sheaf cohomology $H^k(X,G)$ can be represented as \v{C}ech cohomology $\check{H}^k(\cover,G)$ or as Dolbeault cohomology $H^{0,k}(X,G)$. If $(\rho_\alpha)$ is a partition of unity subordinate to $\cover$, then there is an explicit isomorphism between these representations, $\check{H}^k(\cover,G) \overset{\cong}{\to} H^{0,k}(X,G)$, given by
\begin{equation} \label{eq:cechDolbeault}
	[(c_{\alpha_0\dots \alpha_k})] \mapsto
	\left[ \sum_{(\alpha_0,\dots,\alpha_k)}
	\dbar\rho_{\alpha_0} \wedge \dots \wedge \dbar\rho_{\alpha_{k-1}}
	\rho_{\alpha_k} c_{\alpha_0 \dots \alpha_k} \right].
\end{equation}
One way to prove this is to use the partition of unity to construct an appropriate homotopy operator on the Dolbeault-\v{C}ech-complex as described in, for example, \cite{Harvey}*{Section~1}. This is completely analogous to the case of de Rham cohomology, which is described in, for example, \cite{BT}*{\S II.9}.

Recall that $H^k(X,G) \cong \Ext^k(\holo_X,G)$, so the above result could be viewed as an explicit isomorphism between different representations of $\Ext^k(\holo_X,G)$. In this paper, we will generalize this result to provide an explicit isomorphism between different representations of $\Ext^k(\mathcal{F},G)$, where $\mathcal{F}$ is any coherent $\holo_X$-module.

In \cite{TT1}, Toledo and Tong introduced the notion of a twisting cochain $(F,a)$, which gives a \emph{twisted complex} $(C^\bullet(\cover,\Homs^\bullet(F,G)),D^0,D^{\geq 1})$. A particular twisting cochain is a \emph{twisted resolution} of $\mathcal{F}$,
and the authors proved that the $k$th cohomology of the total complex with differential $D = D^0 + D^{\geq 1}$ of the twisted complex computes $\Ext^k(\mathcal{F},G)$, i.e.,
\[
	H^k\left(
	\bigoplus_{p+r=\bullet}
	C^p(\cover,\Homs^r(F,G))
	\right) \cong
	\Ext^k(\mathcal{F},G).
\]
If $\mathcal{F} = \holo_X$, then this twisted complex is just the ordinary \v{C}ech complex with respect to $G$.
If $\mathcal{F}$ has a locally free resolution $F$, then the cohomology of the total complex of the twisted complex is the hypercohomology
of $\Homs(F,G)$ as defined in
for example \cite{GH}*{p. 446}.

Analogously, there is also a generalization of Dolbeault cohomology to represent $\Ext^k(\mathcal{F},G)$ as the $k$th cohomology of $\Hom(\mathcal{F},\currs^{0,\bullet}(G))$.
The main result of this paper is to give an explicit isomorphism between these two representations of $\Ext^k(\mathcal{F},G)$. The key tool that we introduce for explicitly describing this isomorphism is a residue current $R$ associated with a twisted resolution $(F,a)$.
The residue current $R$ is a current with values in $C^\bullet(\cover,\Homs^\bullet(F,F))$, which we describe in Section~\ref{section:residue}.
This construction is a generalization of the residue current associated with a locally free resolution of a sheaf, as introduced by Andersson and Wulcan
in \cite{AW1}, which in turn is a generalization of the classical Coleff--Herrera product introduced in \cite{CH}.

\begin{theorem}
\label{thm:main}
Let $\mathcal{F}$ be a coherent $\holo_X$-module, and let $G$ be a holomorphic vector bundle on $X$. Let $(F,a)$ be a twisted resolution of $\mathcal{F}$,
and let $R$ be the associated residue current. Then there is a quasi-isomorphism
\begin{equation} \label{eq:qiso}
	\bigoplus_{p+r=\bullet} C^p(\cover,\Homs^r(F,G)) \to \Hom(\mathcal{F},\currs^{0,\bullet}(G))
\end{equation}
given by
\begin{equation} \label{eq:twisted-iso-formula}
	\xi \mapsto
	\sum_{j\geq 0} v_0 (\dbar v)^j (\xi R)^j.
\end{equation}
Here, $\xi R$ denotes the composition of $\xi$ and $R$ defined by \eqref{eq:pairing}, $(\xi R)^j$ denotes the component of \v{C}ech degree $j$,
cf., Section~\ref{ssect:notation},
$v$ and $v_0$ are the operators defined in Section~\ref{ssection:vdef}, and $(\dbar v)^j$ denotes the composition $\dbar\circ v$
repeated $j$ times, and $\Hom(\mathcal{F},\currs^{0,\bullet}(G))$ is assumed to be equipped with the differential $-\dbar$ (see Remark~\ref{rem:dbarSign}).
\end{theorem}

The fact that \eqref{eq:twisted-iso-formula} is a quasi-isomorphism means that it defines a morphism of
complexes which induces an isomorphism between the above mentioned representations of $\Ext^k(\mathcal{F},G)$:
\begin{equation}
\label{eq:twisted-iso}
	H^k
	\left(
	\bigoplus_{p+r=\bullet}
	C^p(\cover,\Homs^r(F,G))
	\right) \overset{\cong}{\to}
	H^k(\Hom(\mathcal{F},\currs^{0,\bullet}(G))).
\end{equation}

In case $\mathcal{F}$ is locally free, and one identifies $\mathcal{F}$ as a twisting cochain concentrated in degree $0$, then
the associated residue current $R$ equals the identity, and the isomorphism \eqref{eq:twisted-iso-formula} becomes
$\xi \mapsto v_0 (\dbar v)^k \xi$, which reduces to the isomorphism \eqref{eq:cechDolbeault}.

Theorem~\ref{thm:main} may be considered as a global version of a corresponding statement for sheaf Ext, $\Exts^k(\mathcal{F},\omega)$, where an explicit isomorphism between different representations is provided by residue currents, see Proposition~\ref{prop:Exts}. This result is due to Andersson, \cite{And1}, generalizing earlier work by Dickenstein--Sessa, \cite{DS}, in the case of $\mathcal{F}=\holo_Z$ being the structure sheaf of a complete intersection $Z$.

One situation in which the different realizations of Ext groups in Theorem~\ref{thm:main} appear is in connection with Serre duality, as originally proven for locally free sheaves, in \cite{Serre}, and extended to coherent $\holo_X$-modules by work of Malgrange, \cite{MalSerre}. This version of Serre duality concerns a perfect pairing between Ext groups and sheaf cohomology, and for the two different realizations of Ext in Theorem~\ref{thm:main}, and corresponding realizations for sheaf cohomology, one may define two different such pairings. It turns out that the isomorphism \eqref{eq:twisted-iso} is compatible with these two realizations
of the pairing, as is elaborated on in Section~\ref{sect:serre}.

Twisted resolutions generalize locally free resolutions, and loosely speaking consist of local free resolutions glued
together in an appropriate way. In contrast to locally free resolutions one may always find globally a twisted resolution of
any coherent $\holo_X$-module $\mathcal{F}$.
The residue current we introduce associated with a twisted resolution consists of one part which
is the residue currents as defined in \cite{AW1} associated with the local free resolutions that are glued together, and
additional parts so that the whole residue current have various desirable properties, cf.,
Proposition-Definition~\ref{prop:URdef}, Theorem~\ref{thm:R} and Remark~\ref{rem:Rproperties}.

This paper is organized as follows. We begin, in Section~\ref{section:twisting-cochains}, by giving a brief introduction to the notion of twisting cochains. Here we also make some small adaptations so that we can define currents associated with these objects. Residue currents are best described using the language of almost semi-meromorphic and pseudomeromorphic currents. In Section~\ref{section:pseudo}, we recall the necessary definitions and results about these currents that we shall need in this paper. In Section~\ref{section:residue} we define a residue current associated with a twisting cochain, and we prove that it satisfies certain properties, which we then use to prove our main theorem in Section~\ref{section:main-theorem}. Finally, as mentioned above, in Section~\ref{sect:serre}, we discuss how Theorem~\ref{thm:main} fits into the context of Serre duality.

\section{Twisting cochains}
\label{section:twisting-cochains}
Throughout this paper, $X$ will denote a complex manifold of dimension $n$, and $\cover = (\opens_\alpha)$ will denote a covering of $X$ by Stein open sets.
We will use the notation $\opens_{\alpha_0 \dots \alpha_p} \defeq \opens_{\alpha_0} \cap \dots \cap \opens_{\alpha_p}$.

In this section, we will mainly recall the relevant parts about twisting cochains, twisted resolutions and twisted complexes from \cite{TT1}.
This material is essentially the same as described in Sections 1 and 2 of \cite{TT1}. 
In order to incorporate this theory with the theory of residue currents from \cite{AW1}, and get our sign convention consistent
with both \cite{TT1} and \cite{AW1}, we consider current-valued analogues of the objects described in \cite{TT1}.

Let $F \defeq (F_\alpha)$ and $G \defeq (G_\alpha)$ be families of bounded graded holomorphic vector bundles over $\opens_\alpha$.
Recall that
\[
	\Homs^r(F_\alpha,G_\beta) =
	\bigoplus_j
	\Homs(F_\alpha^{-j},G_\beta^{-j+r}).
\]
Let $\currs^{0,q}(\Homs^r(F_\alpha,G_\beta))$ denote the sheaf over $\opens_{\alpha \beta}$ of $\Homs^r(F_\alpha,G_\beta)$-valued $(0,q)$-currents.
We define a product
\[
	\currs^{0,q}(\Homs^r(F_\beta,G_\gamma)) \times
	\currs^{0,q'}(\Homs^{r'}(E_\alpha,F_\beta)) \to
	\currs^{0,q+q'}(\Homs^{r+r'}(E_\alpha,G_\gamma))
\]
over $\opens_{\alpha \beta \gamma}$.
For decomposable sections $\eta \otimes f$ and $\tau \otimes g$, i.e.,
where $\eta$ and $\tau$ are $(0,q)$ and $(0,q')$-currents, respectively,
and $f$ and $g$ are sections of $\Homs^r(F_\beta,G_\gamma)$ and
$\Homs^{r'}(E_\alpha,F_\beta)$, respectively,
the product is defined as
\begin{equation}
\label{eq:prod1}
	(\eta \otimes f)(\tau \otimes g) \defeq
	(-1)^{rq'}
	(\eta \wedge \tau) \otimes fg,
\end{equation}
provided that either $\eta$ or $\tau$ is smooth.
This product is defined in a way consistent with the sign convention of the super structure
in \cite{AW1}*{Section 2}.
We will consider a sort of \v{C}ech cochains with coefficients in these sheaves. We define
\begin{equation}
\label{eq:cechCurrentHom}
	C^p(\cover,\currs^{0,q}(\Homs^r(F,G))) \defeq
	\prod_{(\alpha_0, \dots, \alpha_p)}
	\currs^{0,q}(\Homs^r(F_{\alpha_p},G_{\alpha_0}))
	({\opens_{\alpha_0 \dots \alpha_p}}).
\end{equation}
For an element $f \in C^p(\cover,\currs^{0,q}(\Homs^r(F,G)))$, we define its \emph{(total) degree} as $\deg f = p+q+r$,
and we call $p$ the \emph{\v{C}ech degree}, $q$ the \emph{current degree} and $r$ the \emph{Hom degree}.

In this paper the families of vector bundles that we shall consider will be concentrated in nonpositive degree. We use this convention to be consistent with the conventions in \cite{TT1}. In \cite{AW1}, complexes of vector bundles are considered to be concentrated in nonnegative degree, and the differential is assumed to be decreasing the degree, in contrast to \cite{TT1}.

We define a bilinear product
\begin{multline}
    \label{eq:pairing}
	   C^p(\cover,\currs^{0,q}(\Homs^r(F,G))) \times
	   C^{p'}(\cover,\currs^{0,q'}(\Homs^{r'}(E,F))) \\ \to
	   C^{p+p'}(\cover,\currs^{0,q+q'}(\Homs^{r+r'}(E,G))),
\end{multline}
which maps $(f,g)$ to the product $fg$ defined by
\begin{equation}
\label{eq:prod2}
	(fg)_{\alpha_0 \dots \alpha_{p+p'}} \defeq
	(-1)^{(q+r)p'}
	f_{\alpha_0 \dots \alpha_p} g_{\alpha_p \dots \alpha_{p+p'}},
\end{equation}
where the product on the right-hand side is defined by \eqref{eq:prod1}.
We will consider three differentials acting on $C^\bullet(\cover,\currs^{0,\bullet}(\Homs^\bullet(F,G)))$,
$D$, $\delta$ and $\dbar$, and the signs in \eqref{eq:prod1} and \eqref{eq:prod2} are chosen in a way that
makes all these operators into antiderivations with respect to the product \eqref{eq:prod2}.
Informally, one could consider the elements to have the ``\v{C}ech part'' to the left, the ``current part'' in the middle, and the ``$\Homs$ part'' to the right,
and these different parts anticommute according to the degree in the products \eqref{eq:prod1} and \eqref{eq:prod2}.

We will now describe how the $\dbar$-operator and an analogue of the \v{C}ech coboundary acts on $C^\bullet(\cover,\currs^{0,\bullet}(\Homs^\bullet(F,G)))$.
We let the $\dbar$-operator act as an operator of degree 1 on $C^\bullet(\cover,\currs^{0,\bullet}(\Homs^\bullet(F,G)))$ by
\[
	(\dbar f )_{\alpha_0 \dots \alpha_p} \defeq
	(-1)^p \dbar f_{\alpha_0 \dots \alpha_p}.
\]
With this definition we have that $\dbar(fg) = (\dbar f)g + (-1)^{\deg f} f (\dbar g$), and as usual $\dbar^2 = 0$.

Next we define an operator of degree 1,
\[
	\delta: C^p(\cover,\currs^{0,q}(\Homs^r(F,G))) \to
	C^{p+1}(\cover,\currs^{0,q}(\Homs^r(F,G))),
\]
by
\[
	(\delta f)_{\alpha_0 \dots \alpha_{p+1}} \defeq
	\sum_{k=1}^p (-1)^k
	f_{\alpha_0 \dots \widehat{\alpha}_k \dots \alpha_{p+1}}
	|_{\opens_{\alpha_0 \dots \alpha_{p+1}}}.
\]
In particular, $\delta f = 0$ for any $f \in C^0(\cover,\currs^{0,q}(\Homs^r(F,G)))$.
Note that $\delta$ is similar to the usual \v{C}ech coboundary, but in the sum, it is necessary to omit $f_{\alpha_1 \dots \alpha_{p+1}}$ and $f_{\alpha_0 \dots \alpha_p}$ since these do not belong to $\Homs^r(F_{\alpha_{p+1}},F_{\alpha_0})$. However, we still have that $\delta$ is a differential and an antiderivation with respect to the product \eqref{eq:prod2}, i.e., $\delta^2 = 0$, and
\[
	\delta(fg) = (\delta f) g + (-1)^{\deg f} f (\delta g).
\]

We are now ready to define the notion of a twisting cochain. We define
\[
	C^p(\cover,\Homs^r(F,G)) \defeq
	\prod_{(\alpha_0, \dots, \alpha_p)}
	\Homs^r(F_{\alpha_p},G_{\alpha_0})
	({\opens_{\alpha_0 \dots \alpha_p}}).
\]
Note that this is the subgroup of $\dbar$-closed elements of $C^p(\cover,\currs^{0,0}(\Homs^r(F,G)))$.
This subgroup is the group of twisted \v{C}ech cochains considered in \cite{TT1}, and when we restrict to this subgroup,
the sign conventions here coincide with the ones in \cite{TT1}.

\begin{definition}
\label{def:twisting-cochain}
A \emph{twisting cochain} $a \in C^\bullet(\cover,\Homs^\bullet(F,F))$ is an element $a = \sum_{k \geq 0} a^k$, where $a^k \in  C^k(\cover,\Homs^{1-k}(F,F))$,
such that
\begin{equation}
\label{eq:twisting-cochain}
	\delta a + aa = 0,
\end{equation}
and $a_{\alpha \alpha}^1 = \id_{F_\alpha}$ for all $\alpha$. For simplicity we shall simply refer to the pair $(F,a)$ as a twisting cochain.
\end{definition}

Recall that a holomorphic vector bundle is defined by a 1-cocycle. A twisting cochain is a generalization of this. By \eqref{eq:twisting-cochain}, $a$ must satisfy
\begin{align}
	&a_\alpha^0 a_\alpha^0 = 0 \label{eq:twisted0} \\
	&a_\alpha^0 a_{\alpha \beta}^1 = a_{\alpha \beta}^1 a_\beta^0 \label{eq:twisted1} \\
	&a_{\alpha \gamma}^1 - a_{\alpha \beta}^1 a_{\beta \gamma}^1 =
	a_\alpha^0 a_{\alpha \beta \gamma}^2 + a_{\alpha \beta \gamma}^2 a_\gamma^0 \label{eq:twisted2}.
\end{align}
The first equation says that $(F_\alpha,a^0)$ is a complex, the second says that $a_{\alpha \beta}^1$ defines a chain map $(F_\beta|_{\opens_{\alpha \beta}},a^0) \to (F_\alpha|_{\opens_{\alpha \beta}},a^0)$, and the third says that, over $\opens_{\alpha \beta \gamma}$, $a_{\alpha \gamma}^1$ and $a_{\alpha \beta}^1 a_{\beta \gamma}^1$ are chain homotopic, with the homotopy given by $a^2_{\alpha\beta\gamma}$. In particular, from the condition $a_{\alpha \alpha}^1 = \id_{F_\alpha}$, it follows that $a_{\alpha \beta}^1$ and $\alpha_{\beta \alpha}^1$ are chain homotopy inverses to each other.

Thus for each $\alpha$ we have cohomology sheaves $\mathcal{H}^\bullet_{a_\alpha^0}(F_\alpha)$ over $\opens_\alpha$, and over each intersection $\opens_{\alpha \beta}$ we have an isomorphism $H(a_{\alpha \beta}^1): \mathcal{H}^\bullet_{a_\beta^0}(F_\beta)|_{\opens_{\alpha \beta}} \to \mathcal{H}^\bullet_{a_\alpha^0}(F_\alpha)|_{\opens_{\alpha \beta}}$ such that over each $\opens_{\alpha \beta \gamma}$, $H(a_{\alpha \beta}^1) H(a_{\beta \gamma}^1) = H(a_{\alpha \gamma}^1)$. We denote by $\mathcal{H}_a^\bullet$ the sheaf that we obtain by gluing the sheaves $\mathcal{H}^\bullet_{a_\alpha^0}$ via these isomorphisms.

The twisting cochains that are of interest in this paper arise in the following way.

\begin{definition}[A twisted resolution of $\mathcal{F}$]
Let $\mathcal{F}$ be a coherent $\holo_X$-module.
By the syzygy theorem one can find a cover $\cover$ such that for each $\alpha$ there exists a free resolution of $\mathcal{F}|_{\opens_\alpha}$,
\[
	\dots
	\overset{a_\alpha^0}{\longrightarrow} F_\alpha^{-1}
	\overset{a_\alpha^0}{\longrightarrow} F_\alpha^0
	\longrightarrow \mathcal{F}|_{\opens_\alpha}
	\longrightarrow 0,
\]
of length at most $\dim X$. Over each intersection $\opens_{\alpha \beta}$ one can find a chain map $a_{\alpha \beta}^1: (F_\beta|_{\opens_{\alpha \beta}},a_\beta^0) \to (F_\alpha|_{\opens_{\alpha \beta}},a_\alpha^0)$ that extend the identity morphism on $\mathcal{F}|_{\opens_{\alpha \beta}}$, and which can be chosen to be the identity if $\alpha = \beta$. Since $a_{\alpha \gamma}^1$ and $a_{\alpha \beta}^1 a_{\beta \gamma}^1$ are chain maps $(F_\gamma|_{\opens_{\alpha \beta \gamma}},a_\gamma^0) \to (F_\alpha|_{\opens_{\alpha \beta \gamma}},a_\alpha^0)$ that extends the identity morphism on $\mathcal{F}|_{\opens_{\alpha \beta \gamma}}$, there exists a chain homotopy $a_{\alpha \beta \gamma}^2$ between these maps. As explained in \cite{OTT}*{Section~1.3}, one can proceed inductively to construct a twisting cochain $a = \sum_k a^k$. Note that $\mathcal{H}_a^k = 0$ if $k \neq 0$ and $\mathcal{H}_a^0 \cong \mathcal{F}$. We shall refer to $(F,a)$ as a \emph{twisted resolution of $\mathcal{F}$}.
\end{definition}

\begin{example} \label{ex:globalRes}
    Assume that $\mathcal{F}$ actually has a global locally free resolution $(E,\varphi)$ of finite length.
    Then we may in the construction above always choose restrictions of the given resolution as the local resolutions,
    i.e., the twisted resolution is defined by $F_\alpha \defeq E|_{\opens_\alpha}$, $a = a^0+a^1$, where
    $a^0_\alpha = \varphi|_{\opens_\alpha}$ and $a^1_{\alpha\beta} = \id_{E|_{\opens_{\alpha\beta}}}$.
    Conversely, it follows by \eqref{eq:twisted2} that for any twisted resolution $(F,a)$ of $\mathcal{F}$ with $a^2=0$,
    $a^1$ defines a $1$-cocycle for each $F^k_\bullet$, and may thus be used to define graded holomorphic
    vector bundles, and by \eqref{eq:twisted0} and \eqref{eq:twisted1}, these graded vector bundles may be turned into a complex
    with the differential defined by $a^0$.
\end{example}

Consider two twisting cochains $(F,a)$ and $(G,b)$. We define an operator $D$ of degree 1 on $C^\bullet(\cover,\currs^{0,\bullet}(\Homs^\bullet(F,G)))$,
\[
	Df \defeq \delta f + bf - (-1)^{\deg f} fa.
\]
We have that $D^2=0$, and
\begin{equation} \label{eq:Dderivation}
	D(fg) = (Df)g + (-1)^{\deg f} f (Dg).
\end{equation}

Note that if $(F,a)$ and $(G,b)$ are global complexes of vector bundles as Example~\ref{ex:globalRes},
and $a^1=\id=b^1$, then the part $D^1$ of $D$ of \v{C}ech degree $1$, $D^1 f = \delta f + b^1 f - (-1)^{\deg f} f a^1$
equals the usual \v{C}ech differential, i.e., $a^1$ and $b^1$ ``compensate'' for indices $k=0$ and $k=p+1$ that
are not included in the definition of $\delta$ above. In particular, in this situation,
$H^k(\bigoplus_{p+r=\bullet} C^p(\cover,\Homs^r(F,G)))$ equals the usual hypercohomology,
as in for example \cite{GH}*{p. 446}.

We have that $\dbar D = -D \dbar$. We combine $D$ and $\dbar$ into an operator $\nabla = D - \dbar$ of degree 1 on $C^\bullet(\cover,\currs^{0,\bullet}(\Homs^\bullet(F,G)))$. Clearly, we have that $\nabla^2 = 0$, and
\begin{equation} \label{eq:nablaDerivation}
	\nabla(fg) = (\nabla f) g + (-1)^{\deg f} f (\nabla g).
\end{equation}

If $G$ is a vector bundle, then we can view it as a family of graded vector bundles concentrated in degree 0, $G = (G_\alpha)$, where $G_\alpha = G|_{\opens_\alpha}$, and we have that a twisting cochain for $G$ is given by $b = b^1 = \id$.
We will tacitly do this kind of identification when for example $(F,a)$ is a twisting cochain and $G$ is a vector bundle,
and we consider groups like $C^p(\cover,\currs^{0,q}(\Homs^r(F,G)))$ or $C^p(\cover,\Homs^r(F,G))$, as for example
in Theorem~\ref{thm:main}.

\subsection{Notation for parts of an element $f \in C^p(\cover,\currs^{0,q}(\Homs^r(F,G)))$}
\label{ssect:notation}

We denote by $f_k^\ell$ the parts of $f$ that belong to
\[
\prod_{(\alpha_0, \dots, \alpha_p)} \currs^{0,q}(\Homs(F_{\alpha_p}^{-\ell},G_{\alpha_0}^{-k}))
(\opens_{\alpha_0 \dots \alpha_p}),
\]
i.e., when using both a superscript and a subscript, we pick out morphisms between bundles
in certain degrees. We will also use the notation \[f^\ell_\bullet \defeq \sum_k f^\ell_k.\]
We say that \emph{$f$ takes values in $\Hom(F^{-\ell},F^{-k})$} if $f=f^\ell_k$,
and that \emph{$f$ takes values in $\Hom(F^{-\ell},F)$} if $f=f^\ell_\bullet$.

We denote by $f^p$ the parts of $f$ that belong to
\[
\prod_{(\alpha_0, \dots, \alpha_p)} \currs^{0,\bullet}(\Homs(F_{\alpha_p},G_{\alpha_0}))
({\opens_{\alpha_0 \dots \alpha_p}}),
\]
i.e., when using a single superscript, we pick out a certain \v{C}ech degree.

\section{Pseudomeromorphic and almost semi-meromorphic currents}
\label{section:pseudo}
We will use the language of residues of almost semi-meromorphic currents
as introduced in \cite{AW3} to describe the currents we study in this paper.

Let $s$ be a holomorphic section of a Hermitian holomorphic line bundle $L$ on $X$,
The \emph{principal value current} $[1/s]$ can be defined as $[1/s] \defeq \lim_{\epsilon \to 0} \chi(|s|^2/\epsilon)\frac{1}{s}$,
where $\chi : \R_{\geq 0} \to \R_{\geq 0}$ is a smooth cut-off function, i.e., $\chi(t) \equiv 0$ for $t \ll 1$ and $\chi(t) \equiv 1$ for $t \gg 1$.
A current is \emph{semi-meromorphic} if it is of the form $[\omega/s] \defeq \omega[1/s]$, where $\omega$ is a smooth form with values in $L$.
A current $a$ is \emph{almost semi-meromorphic} on $X$, written $a \in ASM(X)$, if there is a modification $\pi : X' \to X$ such that
$$a=\pi_*(\omega/s),$$ where $\omega/s$ is a semi-meromorphic current in $X'$.
Recall that a modification is a proper surjective holomorphic map $\pi : X' \to X$, where $X'$ and $X$ are complex spaces, such that
there exists a nowhere dense analytic subset $E \subseteq X$ such that $\pi|_{X'\setminus \pi^{-1}(E)} : X' \setminus \pi^{-1}(E) \to X \setminus E$ is a biholomorphism.
The almost semi-meromorphic currents on $X$ form an algebra over
smooth forms.

Almost semi-meromorphic functions are special cases of so-called pseudomeromorphic currents, as introduced in \cite{AW2}.
The class of pseudomeromorphic currents is closed under multiplication with smooth forms and under $\dbar$.
One important property of pseudomeromorphic currents is that they satisfy the following \emph{dimension principle}.

\begin{proposition} \label{prop:dimPrinciple}
Let $T$ be a pseudomeromorphic $(*,q)$-current on $X$ with support on an analytic subset $Z$.
If $\codim Z > q$, then $T = 0$.
\end{proposition}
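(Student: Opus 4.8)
The statement is local (pseudomeromorphy, support, and bidegree are all local notions), so the plan is to fix a point $x_0 \in X$ and show that $T = 0$ near $x_0$; since this is clear off $Z$, we may assume $x_0 \in Z$ and work in a coordinate ball around $x_0$. The argument will rest on two facts about pseudomeromorphic currents from \cite{AW2}: the structure theorem, according to which locally a pseudomeromorphic current (and the relevant pieces of it) is a finite sum of direct images $\pi_*\tau$ under modifications $\pi$ of \emph{elementary} pseudomeromorphic currents $\tau$, which in suitable local coordinates $(t,s,w)$ take the form $\gamma \wedge \dbar[1/t_1^{a_1}] \wedge \dots \wedge \dbar[1/t_r^{a_r}]$ with $\gamma$ a smooth form in which no $\bar t_j$ or $d\bar t_j$ occurs and with a pole factor $[1/(s_1^{b_1}\cdots s_k^{b_k})]$; and the one-variable identities $\bar t^{\,a}\,\dbar[1/t^b] = 0$ for $a \ge b$ and $d\bar t \wedge \dbar[1/t^b] = 0$.

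From these I would first extract the following lemma: if $S$ is a pseudomeromorphic $(*,q)$-current with $\supp S \subseteq \{h=0\}$ for some holomorphic function $h$, then $\bar h\,S = 0$ and $d\bar h \wedge S = 0$. Granting the structure theorem this reduces to the elementary case: writing $S$ as a sum of direct images $\pi_*\tau$ with $\supp\tau$ contained in the preimage of $\{h=0\}$, the function $h\circ\pi$ is, on the relevant chart, a monomial in $t_1,\dots,t_r$ times a unit, while $\supp\tau$ is the coordinate plane $\{t=0\}$; the identities above then give $\overline{h\circ\pi}\,\tau = 0$ and $d\overline{(h\circ\pi)}\wedge\tau = 0$, and pushing forward (using $\bar h\,\pi_*\tau = \pi_*(\overline{h\circ\pi}\,\tau)$ and likewise for $d\bar h$) yields the claim.

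With the lemma in hand I would argue by descending induction on $\dim Z$. Pick a smooth point $p$ of a top-dimensional component of $Z$, not lying on any other component, and choose local holomorphic coordinates at $p$ in which this component equals $\{z_1 = \dots = z_c = 0\}$, where $c = \codim_p Z \ge \codim Z > q$. Each $z_j$, $1 \le j \le c$, is holomorphic and vanishes on $Z \supseteq \supp T$, so the lemma gives $d\bar z_j \wedge T = 0$ near $p$. By the elementary linear algebra of exterior forms with distribution coefficients, the vanishing of $d\bar z_j \wedge T$ for all $j = 1,\dots,c$ forces $T$ to be a multiple of $d\bar z_1 \wedge \dots \wedge d\bar z_c$ near $p$; since $c \ge q+1$ whereas $T$ has antiholomorphic degree $q$, this is possible only if $T = 0$ near $p$. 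Hence $\supp T \subseteq Z_{\mathrm{sing}}$, a subvariety of strictly smaller dimension and still of codimension $> q$, and the induction closes (base case $\dim Z \le 0$, where every point of $Z$ is smooth so the above applies verbatim, or $Z = \emptyset$, where there is nothing to prove — noting that if $T \ne 0$ then necessarily $q < n$).

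The main obstacle is the lemma, and within it the bookkeeping with the structure theorem: one must ensure that the decomposition of $S$ into direct images of elementary currents can be arranged so that each piece is genuinely supported over $\{h = 0\}$ (no cancellation of supports) and so that the smooth factors of the elementary currents contain no antiholomorphic powers or differentials of the coordinates cutting out the support. This normalization is precisely where one uses the fine monomial structure of pseudomeromorphic currents rather than merely their closure under $\dbar$ and multiplication by smooth forms, and it is what turns the heuristic ``a current supported on a set too small for its bidegree must vanish'' into an actual proof.
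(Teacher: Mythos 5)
A point of comparison first: the paper does not prove Proposition~\ref{prop:dimPrinciple} at all — it is stated as a known property of pseudomeromorphic currents and imported from Andersson--Wulcan \cite{AW2} — so there is no in-paper argument to measure yours against. What you have written is a reconstruction of the standard proof from that literature, and its skeleton is sound: the key lemma (for pseudomeromorphic $S$ with $\supp S\subseteq\{h=0\}$, $h$ holomorphic, one has $\bar h\,S=0$ and $d\bar h\wedge S=0$) is itself established in \cite{AW2}, and your deduction of the dimension principle from it — at a smooth point of $Z$ the relations $d\bar z_j\wedge T=0$, $j=1,\dots,c$, force every monomial of $T$ to contain $d\bar z_1\wedge\dots\wedge d\bar z_c$, impossible in antiholomorphic degree $q<c$, followed by descending induction on the dimension of the support, every subvariety of $Z$ still having codimension $>q$ — is exactly the standard argument and is carried out correctly.

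The genuine gap is the one you flag yourself: in the proof of the lemma you assume that $S$ can be decomposed into pushforwards $\pi_*\tau$ of elementary currents with each $\supp\tau$ contained in the preimage of $\{h=0\}$. The definition of pseudomeromorphic currents only provides \emph{some} locally finite decomposition; the supports of the individual terms may protrude from $\{h=0\}$ and cancel, and there is no normalization result allowing you to rearrange the decomposition so that each piece is supported over $\{h=0\}$. A standard way to circumvent this is to replace the support hypothesis by a statement that is linear in the current and hence checkable term by term: since $\chi(|h|^2/\epsilon)$ vanishes identically on a neighbourhood of $\{h=0\}\supseteq\supp S$, one has $\chi(|h|^2/\epsilon)S=0$ for every $\epsilon>0$, i.e.\ $\mathbf{1}_{X\setminus\{h=0\}}S=0$; it therefore suffices to prove that $\bar h\,\bigl(S'-\mathbf{1}_{X\setminus\{h=0\}}S'\bigr)=0$ and $d\bar h\wedge\bigl(S'-\mathbf{1}_{X\setminus\{h=0\}}S'\bigr)=0$ for an \emph{arbitrary} pseudomeromorphic $S'$. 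The restriction $\mathbf{1}_{X\setminus\{h=0\}}$ is computed termwise on any decomposition (after a further resolution making $\pi^*h$ a monomial times a unit), so your one-variable identities can be applied to each elementary piece separately, with no bookkeeping of supports. As written, then, your central lemma is unproven; either close it along these lines or simply cite it from \cite{AW2}, which is in effect what the paper does for the whole proposition.
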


Given a pseudomeromorphic current $T$ and an analytic subset $V$, in \cite{AW2} was introduced a restriction of $T$ to $X\setminus V$,
which is a pseudomeromorphic current on $X$ defined by $\mathbf{1}_{X \setminus V} T \defeq \lim_{\epsilon \to 0} \chi(|F|^2/\epsilon) T$, where $\chi$
is a cut-off function as above, and $F$ is a section of a holomorphic vector bundle such that $V = \{ F = 0 \}$.
A pseudomeromorphic current $T$ on $X$ is said to have the standard extension property (SEP) if $\mathbf{1}_{X \setminus V} T = T$
for any analytic subset $V$ of positive codimension.
It follows from the dimension principle and the fact that the restriction commutes with multiplication with smooth forms that
almost semi-meromorphic currents have the SEP. In particular, if $\alpha$ is a smooth form on $X \setminus V$, and $\alpha$ has
an extension as an almost semi-meromorphic current $a$ on $X$, then the extension is given by
\begin{equation} \label{eq:asmExtension}
    a = \lim_{\epsilon \to 0} \chi(|F|^2/\epsilon) \alpha.
\end{equation}

Let $a$ be an almost semi-meromorphic current on $X$. Let $Z$ be the smallest analytic subset of $X$ of positive codimension such that $a$
is smooth outside of $Z$. By \cite{AW3}*{Proposition~4.16}, $\mathbf{1}_{X \setminus Z} \dbar a$ is almost semi-meromorphic on $X$.
The \emph{residue} $R(a)$ of $a$ is defined by
\begin{equation*}
	R(a) \defeq \dbar a - \mathbf{1}_{X \setminus Z}\dbar a.
\end{equation*}

The proposition \cite{AW3}*{Proposition~4.16} mentioned above may be slightly reformulated in the following way, which will be useful later.
\begin{proposition} \label{prop:dbarAsmExtension}
	Let $Z$ be an analytic subset of $X$ of positive codimension, and let $\alpha$ be a smooth form on $X \setminus Z$ which
	admits an extension as an almost semi-meromorphic current to $X$. Then $\dbar \alpha$ admits an extension as an almost semi-meromorphic
	current to $X$.
\end{proposition}

Note that
\begin{equation} \label{eq:resSupport}
    \supp R(a) \subseteq Z.
\end{equation}
Since $a$ is almost semi-meromorphic, and thus has the SEP, it follows
by \eqref{eq:asmExtension} that
\begin{equation}
\label{eq:residue}
    R(a)=\lim_{\epsilon \to 0}
    \left(\dbar(\chi_\epsilon a) - \chi_\epsilon \dbar a \right)
    = \lim_{\epsilon \to 0} \dbar\chi_\epsilon \wedge a,
\end{equation}
where $\chi$ is as above, $F$ is a section of a vector bundle such that $\{ F = 0 \} \supseteq Z$, and $F \not\equiv 0$
and $\chi_\epsilon = \chi(|F|^2/\epsilon)$.
It follows directly from for example \eqref{eq:residue} that if $\psi$ is a smooth form, then
\begin{equation} \label{eq:residueSmooth}
    R(\psi \wedge a) = (-1)^{\deg \psi} \psi\wedge R(a).
\end{equation}

For elements in (\ref{eq:cechCurrentHom}), we will say that they are almost semi-meromorphic and pseudomeromorphic respectively if their components are. Moreover the residue of an element in (\ref{eq:cechCurrentHom}) is defined as the residues of the components.

\begin{example} \label{eq:URgenExact}
Let $(E,\varphi)$ be a generically exact complex of locally free $\holo_X$-modules
\begin{equation} \label{eq:genExactComplex}
	0 \to E^{-N} \overset{\varphi_{-N}}{\longrightarrow} \dots \overset{\varphi_{-1}}{\longrightarrow} E^0 \to 0,
\end{equation}
and assume that each $E^k$ is equipped with some hermitian metric. Let $\sigma_{-k}$ denote the minimal right-inverse of $\varphi_{-k}$, cf., the next section,
and let $\sigma := \sum_{k=1}^N \sigma_{-k}$. The minimal right-inverses are smooth outside of the set $Z_k$ where the rank of $\varphi_{-k}$ drops from its generic rank,
so $\sigma_{-k}$ is smooth outside of $Z_k$. Following \cite{AW1}*{Section~2}, one may find a resolution $\pi : \tilde{X} \to X$ of $X$ such that locally on $\tilde{X}$,
there is a holomorphic function $f_{-k,0}$ defining the ideal of the $r_k\times r_k$-minors
of $\varphi_{-k}$, where $r_k$ is the generic rank of $\varphi_{-k}$ and a smooth
$\End(E)$-valued section $\sigma_{-k}'$ such that $\pi^* \sigma_{-k} = \sigma_{-k}'/f_{-k,0}$.
Thus, $\sigma$ is an almost
semi-meromorphic section of $\End(E)$. More generally, the form $u = \sum_{\ell=1}^N \sigma(\dbar\sigma)^{\ell-1}$, a priori defined as a smooth form outside of $Z = \cup Z_k$,
admits a similar decomposition, cf., \cite{AW1}*{(2.8)}, and may thus be extended to an almost semi-meromorphic current $U^E$ on all of $X$.
The residue current $R^E$ associated to $(E,\varphi)$, as defined in \cite{AW1}, is $R^E = \id_E - \nabla U^E$, where $\nabla = \varphi-\dbar$.
Alternatively, $R^E$ is in this situation given by $R^E=R(U^E)$, cf., i.e., the discussion in \cite{Lar}*{Section~2.4}.

In case \eqref{eq:genExactComplex} is exact in degree $<0$, i.e., such that it provides a resolution of $\coker \varphi_{-1}$, then $R^E$ satisfies the fundamental
duality principle that a section $\xi$ of $E_0$ belongs to $\im \varphi_1$ if and only if $R^E \xi = 0$.
\end{example}

One of the basic examples of such currents is the following.

\begin{example}
	Let $f=(f_1,\dots,f_p)$ be a tuple of holomorphic functions defining a complete intersection, i.e., such that $\codim \{ f_1 = \dots = f_p = 0 \} = p$.
	Then one may define the so-called Coleff-Herrera product of $f$, as introduced (in a slightly different way) in \cite{CH} by
	\begin{equation*}
		\dbar \frac{1}{f_p}\wedge \dots \wedge \dbar \frac{1}{f_1} := \lim_{\epsilon_p \to 0} \dots \lim_{\epsilon_1 \to 0} \frac{\dbar\chi(|f_p|^2/\epsilon_p)}{f_p} \wedge \dots \wedge \frac{\dbar\chi(|f_1|^2/\epsilon_1)}{f_1},
	\end{equation*}
	where $\chi$ is a smooth cut-off functions as above.

	Given $f$, there is an associated complex of vector bundles, the Koszul complex $(K,\psi) = (\bigwedge \holo^{\oplus p}, \delta_f)$, which provides a resolution of $\holo/\mathcal{J}(f)$ when $f$ defines a complete intersection.
	If one equips $\holo^{\oplus p}$ with the trivial metric, then the current $U^K$ associated to $K$ equals the Bochner-Martinelli kernel of $f$, and the associated residue current $R^K$ equals the Bochner-Martinelli
	residue current as introduced in \cite{PTY}, which in turn equals the Coleff-Herrera product of $f$.
\end{example}

More generally, coherent $\holo_X$-modules which are Cohen-Macaulay yield residue currents which may at least locally be described in terms of Coleff-Herrera products.

\begin{example}
	Let $\mathcal{G}$ be a coherent $\holo_X$-module, and assume that $\mathcal{G}$ is Cohen-Macaulay. Thus, $\mathcal{G}$ admits locally
	a free resolution \eqref{eq:genExactComplex}, where the length $N$ equals $p=\codim (\supp \mathcal{G})$.
	Let $(E,\varphi)$ be such a resolution. We may furthermore locally find a complete intersection $(f_1,\dots,f_p)$
	and a surjective morphism $(\holo/\mathcal{J}(f_1,\dots,f_p))^{\oplus r} \to \mathcal{G}$, where $r = \rank E_0$.
	If one lets $(K,\psi)$ be the direct sum of $r$ copies of the Koszul complex of $f$, then one may find a morphism
	of complexes $a : (K,\psi) \to (E,\varphi)$ which extends the morphism $(\holo/\mathcal{J}(f_1,\dots,f_p))^{\oplus r} \to \mathcal{G}$.
	Then, the comparison formula for residue currents, see \cite{Lar}*{Section~4}, yields that $R^E$ is given
	by\footnote{Up to an identification of $E_0$ with $K_0$, which are free $\holo$-modules of the same rank}
	$a_p R^K$, i.e., it is given by the holomorphic matrix $a_p$ times the Coleff-Herrera product of $f$.
\end{example}

\section{A residue current associated with a twisted resolution}
\label{section:residue}
In \cite{AW1}, Andersson and Wulcan constructed a residue current from a locally free resolution of a sheaf of positive codimension. In this section we will generalize this construction to construct a residue current from a twisted resolution of a coherent $\holo_X$-module.

Throughout this section, let $(F,a)$ be a twisted resolution of a coherent $\holo_X$-module $\mathcal{F}$.
We will tacitly assume that the bundles $F_\alpha^k$ are equipped with Hermitian metrics.

Let $Z$ be the smallest analytic subset of $X$ such that $\mathcal{F}|_{X \setminus Z}$ is a vector bundle.
For each $\alpha$, let $\opens_\alpha' \defeq \opens_\alpha \setminus Z$, and define the cover $\cover' \defeq (\opens_\alpha')$ of $X \setminus Z$. We define an element $\sigma^0 \in C^0(\cover',\currs^{0,0}(\Homs^{-1}(F,F)))$ in the following way. For each $\alpha$, let $\sigma_\alpha^0$ be the minimal right-inverse of $a_\alpha^0$ on $\opens_\alpha'$, i.e., the Moore--Penrose inverse, which can be defined by the properties $a_\alpha^0 \sigma_\alpha^0 a_\alpha^0 = a_\alpha^0$, $\sigma_\alpha^0|_{(\im a_\alpha^0)^\perp}= 0$, and $\im \sigma_\alpha^0 \perp \ker a_\alpha^0$. From the last two properties it follows that $(\sigma^0)^2 = 0$.
We write $a = a^0 + a^{\geq 1}$, i.e., $a^{\geq 1} = \sum_{k\geq 1} a^k$.
Define
\begin{equation} \label{eq:sigmadef}
	\sigma \defeq \sigma^0(\id+a^{\geq 1} \sigma^0)^{-1} = \sigma^0-\sigma^0 a^{\geq 1}\sigma^0 + \sigma^0 a^{\geq 1}\sigma^0a^{\geq 1}\sigma^0 - \dots,
\end{equation}
where the sum in the right-hand side is finite since $a^{\geq 1}\sigma^0$ has negative Hom degree, and thus is nilpotent since the complexes have finite length.
Throughout this section, $\id$ denotes the identity element on $F$ in $C^0(\cover,\Homs^0(F,F))$ or $C^0(\cover',\Homs^0(F,F))$.
Since $(\sigma^0)^2 = 0$, it follows that $\sigma^2 = 0$.
\begin{propdef}
\label{prop:URdef}
Define
\begin{equation} \label{eq:udef}
    u \defeq \sigma(\id-\dbar\sigma)^{-1} = \sigma+\sigma\dbar\sigma+\sigma(\dbar\sigma)^2+ \dots.
\end{equation}
Then
\begin{equation} \label{eq:Udef}
    U \defeq \lim_{\epsilon \to 0} \chi_\epsilon u
\end{equation}
is an almost semi-meromorphic extension of $u$. Moreover,
\begin{equation} \label{eq:Rdef}
	R \defeq \id - \nabla U
\end{equation}
is pseudomeromorphic and $\nabla$-closed, i.e., $\nabla R = 0$.
\end{propdef}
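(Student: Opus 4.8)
The plan is to verify the three claims about $u$, $U$, and $R$ in turn, relying on the fact (established in Proposition-Definition~\ref{prop:URdef}'s setup) that $\sigma$ is almost semi-meromorphic with $\sigma^2 = 0$. First I would observe that $\sigma$ is almost semi-meromorphic: indeed $\sigma^0_\alpha$, being the Moore--Penrose inverse of the holomorphic morphism $a^0_\alpha$, is almost semi-meromorphic (this is the local content from \cite{AW1}), and $\sigma$ is obtained from $\sigma^0$ by composition with the holomorphic $a'$ via the finite Neumann series \eqref{eq:sigmadef}, so $\sigma$ lies in the algebra of almost semi-meromorphic currents. Then $\dbar\sigma$, while not itself almost semi-meromorphic, is smooth outside the relevant subvariety $Z$, and the series in \eqref{eq:udef} is finite because $\dbar\sigma$ raises current degree by $1$ and the current degree is bounded by $n = \dim X$; hence $u$ is a smooth form on $X \setminus Z$ (composition and wedge products of smooth forms there). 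The claim that $U = \lim_\epsilon \chi_\epsilon u$ is an almost semi-meromorphic extension of $u$ then follows from \eqref{eq:asmExtension}, provided I first exhibit $u$ as the restriction to $X \setminus Z$ of some almost semi-meromorphic current — the natural candidate being $U' \defeq \sigma(\id - \dbar\sigma)^{-1}$ interpreted via the same finite series but with $\sigma$ replaced by its global almost semi-meromorphic representative and $\dbar$ acting on that; one checks term by term using \cite{AW3}*{Proposition~4.16} that each $\sigma(\dbar\sigma)^k$ extends almost semi-meromorphically, so the finite sum does too, and by the SEP this extension must coincide with $\lim_\epsilon \chi_\epsilon u$.

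Next I would establish the key algebraic identity relating $u$ and the twisting-cochain data, namely that on $X \setminus Z$,
\begin{equation*}
    \nabla u = \id - R_{X\setminus Z},
\end{equation*}
where $R_{X\setminus Z}$ is a smooth representative — in fact one expects $\nabla u = \id$ on $X \setminus Z$ since there $\mathcal{F}$ is a vector bundle and the resolution is exact. Concretely, from $\sigma = \sigma^0(\id + a'\sigma^0)^{-1}$ and $a^0\sigma^0 + \sigma^0 a^0 = \id$ on the image (using the Moore--Penrose relations together with exactness away from $Z$, which forces $\ker a^0 = \im a^0$ in each relevant degree), one derives $a\sigma + \sigma a = \id$ on $X \setminus Z$, i.e., $\sigma$ is a homotopy for the complex $(F, a)$. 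A standard computation — the same one underlying the Andersson--Wulcan construction — then gives $\nabla u = (D - \dbar)u = \id$ on $X \setminus Z$. I would present this as a lemma, being careful with the signs coming from \eqref{eq:prod1}, \eqref{eq:prod2} and the definitions of $D$, $\delta$, $\dbar$, since that sign bookkeeping is where the \cite{TT1}-versus-\cite{AW1} reconciliation matters.

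For pseudomeromorphy of $R = \id - \nabla U$: since $U$ is almost semi-meromorphic hence pseudomeromorphic, and $\delta$ and multiplication by the holomorphic $a$, $b$ preserve pseudomeromorphy, while $\dbar$ preserves pseudomeromorphy, $\nabla U = (\delta + b\,\cdot - \dbar - (-1)^{\deg}(\cdot)a)U$ is pseudomeromorphic; and $\id$ is smooth, so $R$ is pseudomeromorphic. For $\nabla$-closedness: $\nabla R = \nabla \id - \nabla^2 U = 0$ because $\nabla \id = 0$ (one checks $\delta\id = 0$, $\dbar\id = 0$, and $b\id - \id a = \id - \id = 0$ using that $b^1$ is the identity cochain for $G$ — here for the current at hand $G = F$, $b = a$ — together with the cochain relations; more precisely $\nabla\id = D\id = \delta\id + a\id - \id a = 0$) and $\nabla^2 = 0$ as noted after \eqref{eq:nablaDerivation}.

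The main obstacle I anticipate is not any single hard estimate but the careful justification that $\lim_\epsilon \chi_\epsilon u$ exists and agrees with the term-by-term almost semi-meromorphic extension of the finite series $\sum_k \sigma(\dbar\sigma)^k$ — i.e., that $\dbar$ and the regularization $\lim_\epsilon \chi_\epsilon(\cdot)$ interact as expected on products of the form (almost semi-meromorphic)$\wedge\,\dbar$(almost semi-meromorphic)$\wedge\cdots$. This requires invoking \cite{AW3}*{Proposition~4.16} iteratively and checking that the smallest variety outside which the intermediate currents are smooth is controlled, so that the cut-off $\chi_\epsilon$ built from a single $F$ with $\{F = 0\} \supseteq Z$ simultaneously regularizes every term. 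The signs in the identity $\nabla u = \id$ on $X \setminus Z$ are a close second in difficulty, but are ultimately mechanical.
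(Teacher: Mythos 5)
Your proof of what Proposition-Definition~\ref{prop:URdef} actually asserts is correct and follows the paper's own route: $\sigma^0$ extends almost semi-meromorphically (\cite{AW3}*{Example 4.18}), hence so does $\sigma$ via the finite series \eqref{eq:sigmadef} and the algebra property, the terms $\sigma(\dbar\sigma)^k$ extend by \cite{AW3}*{Proposition~4.16}, the SEP via \eqref{eq:asmExtension} identifies the extension of $u$ with $\lim_{\epsilon\to 0}\chi_\epsilon u$, pseudomeromorphy of $R$ follows since pseudomeromorphic currents are preserved by $\dbar$, multiplication with smooth forms and restriction, and $\nabla R=0$ is immediate from $\nabla\id=0$ and $\nabla^2=0$.

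One caution about your middle paragraph, which is not needed for this statement and should not be carried forward: the claim that exactness away from $Z$ forces $a^0\sigma^0+\sigma^0 a^0=\id$, and hence $\nabla u=\id$ on $X\setminus Z$, is false when $\codim\mathcal{F}=0$. Outside $Z$ the complex $(F_\alpha,a^0_\alpha)$ is exact only in negative degrees; its degree-zero cohomology is the (possibly nonvanishing) vector bundle $\mathcal{F}|_{X\setminus Z}$, so one only gets $a^0\sigma^0+\sigma^0 a^0=\id-Q^0$, with $Q^0$ the projection onto $(\im a^0)^\perp$ in $F^0$, and the correct identity is $\nabla u=\id-Q+u\nabla Q$ as in Lemma~\ref{lmaQ}; this is precisely the subtlety the paper's remark on \cite{AW1} addresses, and the correction term $Q$ matters later (Proposition~\ref{prop:Rasm} and Theorem~\ref{thm:R}). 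Since the Proposition-Definition itself only needs the extension and the formal identities $\nabla\id=0$, $\nabla^2=0$, your proof of the stated claims stands, but the asserted identity $\nabla u=\id$ would be a genuine error if relied upon in the sequel.
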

We shall refer to $R$ as the residue current associated with $(F,a)$.
Note that $\sigma^0$ has degree $-1$, and $a^{\geq 1}$ has degree $1$, so
it follows by \eqref{eq:sigmadef}, \eqref{eq:udef}, and \eqref{eq:Udef}
that $\sigma$, $u$, and $U$ have degree $-1$.
Since $\nabla$ is an operator of degree $1$, it follows that $R$ has degree $0$.

\begin{proof}
    On $\opens_\alpha'$, $\sigma^0_\alpha$ coincides with the form $\sigma$ associated with $F_\alpha^\bullet$
    defined in \cite{AW1}. As explained in \cite{AW3}*{Example 4.18}, this form has
    an extension as an almost semi-meromorphic current on $\opens_\alpha$. By \eqref{eq:sigmadef}, $\sigma$
    is a sum of products of $\sigma^0$ and $a^{\geq 1}$, and since the almost semi-meromorphic currents
    form an algebra over smooth forms and may be restricted to open subsets, and $a^{\geq 1}$ is holomorphic,
    it follows that $\sigma$ has an almost semi-meromorphic extension.
    In addition, by \cite{AW3}*{Proposition~4.16} (or the argument in \cite{AW3}*{Example 4.18}), $\dbar\sigma$ has an
    almost semi-meromorphic extension, so by \eqref{eq:udef}, it follows that $u$ has an
    almost semi-meromorphic extension. By \eqref{eq:asmExtension}, this extension $U$ is given by
    \eqref{eq:Udef}.

    We have that $U$ is almost semi-meromorphic, and thus pseudomeromorphic.
Thus $\nabla U$ is also pseudomeromorphic since the class of pseudomeromorphic currents is
preserved by $\dbar$, multiplication with smooth forms, and restrictions to open subsets.
The same then holds for $R$ by \eqref{eq:Rdef}.
Since $\nabla \id = 0$ and $\nabla^2=0$, it follows that $\nabla R = 0$.
\end{proof}
The main aim of this section is to prove that the residue current has the following properties.
\begin{theorem}
\label{thm:R}
Let $(F,a)$ be a twisted resolution of a coherent $\holo_X$-module $\mathcal{F}$, and let $R$ be its associated residue current. Then
\begin{enumerate}[\normalfont(a)]
	\item \label{item:HomF0}
$R$ takes values in $\Hom(F^0,F)$,
\item
	$R_k^0 = 0$ for $k < \codim \mathcal{F}$ (where $\codim \mathcal{F}$ denotes $\codim (\supp \mathcal{F})$),
\item
$R a^0 = 0$, and
\item
$\supp R \subseteq \supp \mathcal{F}$.
\end{enumerate}
\end{theorem}

\begin{remark}
For simplicity, we will only consider residue currents associated with a coherent $\holo_X$-module,
since this assumption is crucial in Theorem~\ref{thm:R}, while with minor adaptions, one could in
Proposition-Definition~\ref{prop:URdef} consider residue currents associated with arbitrary twisting cochains,
not necessarily twisted resolutions of coherent $\holo_X$-modules.
\end{remark}

\begin{remark} \label{rem:Rproperties}
In the proof of Theorem~\ref{thm:main}, we rely on two crucial properties of $R$, first of all that $R$ is a pseudomeromorphic current
which is $\nabla$-cohomologous to the identity morphism on $(F,a)$, and secondly that $R$ takes values in $\Hom(F^0,F)$, i.e., part (\ref{item:HomF0}) of Theorem~\ref{thm:R}.
The first property is trivially guaranteed by \eqref{eq:Rdef} for any choice of $U$.
The point of the choice of $\sigma$, $u$ and $U$ in \eqref{eq:sigmadef}, \eqref{eq:udef} and \eqref{eq:Udef},
is that with these choices, also the second property is fulfilled.

Our definition is very much inspired by the construction of residue currents from \cite{AW1} which satisfy
similar properties in cases when there exists a global locally free resolution.
One crucial property of the residue currents defined in \cite{AW1} is that they satisfy a duality principle,
\cite{AW1}*{Theorem~1.1}. Our current $R$ also satisfy a similar duality principle, which is essentially a
consequence of the above two crucial properties that $R$ satisfies, see \cite{J}*{Theorem 1.1}.
\end{remark}

For the proof of Theorem~\ref{thm:R} we will need the following result, which provides a useful decomposition of the residue current.

\begin{proposition} \label{prop:Rasm}
    Let $R$ be the current defined by \eqref{eq:Rdef}. Then, $R$ can be decomposed as
    \begin{equation*}
        R = R(U) + R',
    \end{equation*}
    where $R(U)$ is the residue in the sense of \eqref{eq:residue} of the current $U$ defined by \eqref{eq:Udef},
    and $R'$ is almost semi-meromorphic with values in $\Hom(F^0,F)$.
    If $\codim \mathcal{F} > 0$, then $R' = 0$.
\end{proposition}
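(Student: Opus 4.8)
The plan is to read off $R'$ from $R=\id-\nabla U$ by a short algebraic manipulation and then to determine its structure on $X\setminus Z$ via the standard extension property. First I would unwind $\nabla=D-\dbar$: since $U$ takes values in $\Homs^\bullet(F,F)$ and, as noted after Proposition-Definition~\ref{prop:URdef}, has total degree $-1$, we have $DU=\delta U+aU+Ua$, so that $\nabla U=\delta U+aU+Ua-\dbar U$. By Proposition-Definition~\ref{prop:URdef} the current $U$ is almost semi-meromorphic, hence $\dbar U=R(U)+\mathbf{1}_{X\setminus Z}\dbar U$, where $R(U)$ is the residue of $U$ in the sense of \eqref{eq:residue} (with $\supp R(U)\subseteq Z$ by \eqref{eq:resSupport}), and where $\mathbf{1}_{X\setminus Z}\dbar U$ is almost semi-meromorphic; this last fact is precisely the ingredient from \cite{AW3}*{Proposition~4.16} already used in the proof of Proposition-Definition~\ref{prop:URdef}. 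Substituting gives $R=R(U)+R'$ with
\[
	R'=\id-\delta U-aU-Ua+\mathbf{1}_{X\setminus Z}\dbar U .
\]
Every term on the right is almost semi-meromorphic: $\id$ is smooth, $\delta U$ is a signed sum of restrictions of the almost semi-meromorphic current $U$, the products $aU$ and $Ua$ are almost semi-meromorphic because $a$ is holomorphic and the almost semi-meromorphic currents form an algebra over smooth forms, and $\mathbf{1}_{X\setminus Z}\dbar U$ is almost semi-meromorphic as just recalled. Hence $R'$ is almost semi-meromorphic.

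To determine the values of $R'$ I would restrict to $X\setminus Z$, where all the currents involved are smooth, where $U=u$, and where $R(U)$ vanishes since its support lies in $Z$; thus $R'|_{X\setminus Z}=R|_{X\setminus Z}=\id-\nabla u$, with $u$ and $\sigma$ as in \eqref{eq:udef} and \eqref{eq:sigmadef}. Using the twisting-cochain relations \eqref{eq:twisted0}--\eqref{eq:twisted2} together with the properties of the minimal inverse $\sigma^0$ — in particular $a^0\sigma^0+\sigma^0a^0=\id-\pi$ on $X\setminus Z$, where $\pi$ is the fibrewise orthogonal projection of $F^0_\alpha$ onto the harmonic subbundle $\mathcal{H}^0_{a^0_\alpha}\cong\mathcal{F}|_{\opens_\alpha\setminus Z}$ — one computes $\id-\nabla u$ and finds that it takes values in $\Hom(F^0,F^0)\subseteq\Hom(F^0,F)$; this is the \v{C}ech-cochain-valued analogue, carried out $\opens_\alpha$-wise and corrected by the gluing terms $a^1,a^2,\dots$ and by $\delta$, of the corresponding computation for a single locally free resolution in \cite{AW1}. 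Since $R'$ is almost semi-meromorphic it has the SEP, so $R'=\mathbf{1}_{X\setminus Z}R'$ is the almost semi-meromorphic extension of $R'|_{X\setminus Z}$, and therefore $R'$ too takes values in $\Hom(F^0,F)$. If moreover $\codim\mathcal{F}>0$, then $Z=\supp\mathcal{F}$: off $\supp\mathcal{F}$ the sheaf $\mathcal{F}$ is zero, hence a rank-$0$ vector bundle, while at a point of $\supp\mathcal{F}$ the stalk of $\mathcal{F}$ is a nonzero module supported on a proper subvariety and so cannot be free. Hence $\mathcal{F}|_{X\setminus Z}=0$, all the $\mathcal{H}^0_{a^0_\alpha}$ vanish, so $\pi=0$ and $a^0\sigma^0+\sigma^0a^0=\id$ on $X\setminus Z$, which forces $\id-\nabla u=0$ there; by the SEP, $R'=0$.

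The step I expect to be the main obstacle is the computation of $\id-\nabla u$ on $X\setminus Z$: one must push the Andersson--Wulcan argument for a single locally free resolution through the twisted setting, keeping careful track of the higher \v{C}ech components produced by $a^1,a^2,\dots$ and $\delta$, and of the signs in \eqref{eq:prod1} and \eqref{eq:prod2}, in order to verify that all contributions with source $F^{-k}$ for $k\ge 1$ cancel and that, in the exact case, the remaining projection vanishes.
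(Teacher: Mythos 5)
Your overall strategy is the same as the paper's: decompose $R=R(U)+R'$ with $R'$ almost semi-meromorphic, restrict to $X\setminus Z$ where $U=u$ and $R(U)$ is not seen, and identify $R'$ as the almost semi-meromorphic (SEP) extension of $R|_{X\setminus Z}=\id-\nabla u$. Your way of producing the decomposition, by substituting $\dbar U=R(U)+\mathbf{1}_{X\setminus Z}\dbar U$ into $\id-\nabla U$ and noting that $\delta U$, $aU$, $Ua$ and $\mathbf{1}_{X\setminus Z}\dbar U$ are almost semi-meromorphic (the last by \cite{AW3}*{Proposition~4.16}), is legitimate modulo the componentwise sign conventions for $\dbar$ and the residue, and amounts to the paper's step combining \eqref{eq:Rprim} with \eqref{eq:residue}.

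However, the heart of the proposition is exactly the computation you defer: that $\id-\nabla u$ takes values in $\Hom(F^0,F)$ and vanishes where the local complexes are exact at level $0$. In the paper this is Lemma~\ref{lmaQ} (proved via Lemma~\ref{lmaQ0}): setting $Q\defeq\id-D\sigma$ one has the identity $\nabla u=\id-Q+u\nabla Q$, and $Q=Q^0-\sigma DQ^0$ with $Q^0=\id-a^0\sigma^0-\sigma^0a^0$, from which it follows that $Q$ and $\nabla Q$ take values in $\Hom(F^0,F)$ and that $Q=0$ when $\codim\mathcal{F}>0$. You only assert that ``one computes $\id-\nabla u$ and finds'' the desired structure, and you yourself flag this as the main unresolved obstacle; without it neither the statement about the values of $R'$ nor the vanishing for $\codim\mathcal{F}>0$ is established. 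In particular, the fact that $a^0\sigma^0+\sigma^0a^0=\id$ off $\supp\mathcal{F}$ does not by itself force $\nabla u=\id$ in the twisted setting, because of the $a'$- and $\delta$-contributions; one needs the identity above (or an equivalent computation). Moreover, your guessed conclusion is too strong: in general $\id-\nabla u=Q-u\nabla Q$ has nonzero components in $\Hom(F^0,F^{-k})$ for $k\geq 1$, so it takes values in $\Hom(F^0,F)$ but not merely in $\Hom(F^0,F^0)$; compare the paper's remark after Lemma~\ref{lmaQ0}, where for a global resolution with $\codim\mathcal{F}=0$ one gets $R'=[Q^0]+[u\dbar Q^0]$. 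So the genuine gap is the missing analogue of Lemmas~\ref{lmaQ} and \ref{lmaQ0}, which is where all the actual work of the proposition lies.
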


The proof of this proposition in turn relies on the following lemma, whose proof we postpone.

\begin{lemma} \label{lmaQ}
    Let $Q$ be the smooth form defined on $X\setminus Z$ by $Q \defeq \id -D\sigma$. Then $Q$ and $\nabla Q$ take values in
    $\Hom(F^0,F)$ and
    \begin{equation} \label{eq:nablau}
        \nabla u = \id - Q+u\nabla Q.
    \end{equation}
    Furthermore, if $\codim \mathcal{F} > 0$, then $Q = 0$.
\end{lemma}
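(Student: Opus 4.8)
The plan is to compute $\nabla u$ directly from the defining geometric series $u = \sigma(\id - \dbar\sigma)^{-1} = \sum_{j\ge 0}\sigma(\dbar\sigma)^j$, using that $\sigma^2 = 0$, $\dbar^2 = 0$, and that $\nabla = D - \dbar$ is an antiderivation with $\nabla^2 = 0$. First I would observe that since $\sigma$ has total degree $-1$ and $\sigma^2 = 0$, the identity $u = \sigma + u\,\dbar\sigma$ holds (this is just the recursion for the Neumann series), and similarly $u = \sigma + \sigma\,\dbar u$ after rearranging; equivalently $(\id - \dbar\sigma)$ is a two-sided inverse situation so $u(\id - \dbar\sigma) = \sigma = (\id - \dbar\sigma)u$ does \emph{not} hold but the left version $u = \sigma + (\dbar\sigma) u$ needs care with ordering — I would fix the convention from \eqref{eq:udef} as $u = \sum_j \sigma(\dbar\sigma)^j$, giving $u = \sigma + \sigma(\dbar\sigma)\big(\sum_j (\dbar\sigma)^j\big)$... actually the cleanest recursion is $u = \sigma + \sigma\,\dbar\sigma + \sigma(\dbar\sigma)^2 + \cdots = \sigma(\id + \dbar u)$, i.e. $u = \sigma + \sigma\,\dbar u$. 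I would then apply $\nabla$ to $u = \sigma + \sigma\,\dbar u$. Using \eqref{eq:nablaDerivation}, $\nabla(\sigma\,\dbar u) = (\nabla\sigma)\,\dbar u + (-1)^{-1}\sigma\,\nabla\dbar u = (\nabla\sigma)\dbar u - \sigma\,\nabla\dbar u$, and since $\nabla = D - \dbar$ and $\dbar^2=0$, $\nabla\dbar u = D\dbar u = -\dbar D u = -\dbar\nabla u$ (using $\dbar D = -D\dbar$ from the excerpt). So $\nabla u = \nabla\sigma + (\nabla\sigma)\dbar u + \sigma\,\dbar\nabla u$.

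Now I would substitute the value of $\nabla\sigma$. Since $\nabla\sigma = D\sigma - \dbar\sigma$ and $Q := \id - D\sigma$, we get $\nabla\sigma = \id - Q - \dbar\sigma$. Plugging in: $\nabla u = (\id - Q - \dbar\sigma) + (\id - Q - \dbar\sigma)\dbar u + \sigma\,\dbar\nabla u$. Expanding and regrouping the terms that reassemble $u = \sigma + \sigma\dbar u$ and $\id + \dbar u$, and using $\sigma\dbar\nabla u$ together with $\sigma^2 = 0$ to collapse the $\dbar\sigma$-terms, the combination $-\dbar\sigma - (\dbar\sigma)\dbar u = -\dbar\sigma(\id+\dbar u) = -\dbar(\sigma(\id+\dbar u)) + (\dbar\sigma)... $ — here I would instead track things through $\nabla(\sigma\dbar u)$ more carefully, but the target identity $\nabla u = \id - Q + u\nabla Q$ tells me exactly which terms must survive: the ``$\id$'' comes from $\nabla\sigma$ having an $\id$ part via $D\sigma$ not quite being $\id$, the ``$-Q$'' is the leftover, and ``$u\nabla Q$'' comes from iterating. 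Concretely I expect the bookkeeping to show $\nabla u = \id - Q + u\,\nabla Q$ after using $\nabla Q = -\nabla D\sigma = -D^2\sigma + \dbar D\sigma = \dbar D\sigma$ and re-summing the geometric series; the key algebraic input is $\sigma\sigma = 0$ which kills all ``double-$\sigma$'' cross terms, and $\dbar^2 = 0$, $D^2 = 0$.

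For the ``values'' claims: $Q = \id - D\sigma$ and $D\sigma = \delta\sigma + b\sigma - (\pm)\sigma a$; since $\sigma = \sigma^0(\id + a'\sigma^0)^{-1}$ is built from $\sigma^0$ and $a'$, and $\sigma^0$ maps $F^{-k}\to F^{-k-1}$ (Hom degree $-1$) while the identity maps $F^0\to F^0$, one checks degree by degree that $Q$ takes values in $\Hom(F^0,F)$ — the point being that $D\sigma$ restricted to $F^{-\ell}$ for $\ell>0$ equals... I would argue this from the Moore--Penrose relations $a^0\sigma^0 a^0 = a^0$ and $(\sigma^0)^2=0$, which give $a^0\sigma^0 + \sigma^0 a^0 = \id$ \emph{on the complement of homology} — precisely the statement that $Q$ measures the failure of $\sigma^0$ to be a homotopy, and this failure lives only in Hom-degree $0$ source, i.e. $\Hom(F^0,F)$. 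The same propagates to $\nabla Q$ since $\nabla$ preserves this (it only shifts total degree). Finally, if $\codim\mathcal{F}>0$, then $\mathcal{H}_a^0\cong\mathcal{F}$ has support of positive codimension and $\mathcal{H}_a^k=0$ for $k>0$, so the complex $(F_\alpha, a_\alpha^0)$ is exact generically, forcing $a^0\sigma^0 + \sigma^0 a^0 = \id$ identically as an \emph{almost semi-meromorphic} identity with the SEP, hence $Q = 0$; I would phrase this as: $Q$ is almost semi-meromorphic (being a polynomial in the asm current $\sigma$ with holomorphic coefficients), $Q$ vanishes on the dense open set $X\setminus Z$ where $\mathcal F$ is a bundle and the resolution is exact, and an asm current vanishing on a dense open set vanishes by the SEP.

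The main obstacle I expect is the sign/ordering bookkeeping in deriving \eqref{eq:nablau}: keeping straight the $(-1)^{\deg}$ factors in $\nabla(fg) = (\nabla f)g + (-1)^{\deg f}f(\nabla g)$ when $\sigma$, $u$, $\dbar\sigma$ all have specific degrees $-1$, $-1$, $0$, and correctly using $\dbar D = -D\dbar$ to move $\nabla$ past $\dbar$. The conceptual content — that $Q$ is the obstruction to $\sigma$ being a contracting homotopy and vanishes when the resolution is generically exact — is straightforward; it's the consistent-signs verification that requires care, and I would do it by expanding $u = \sum_j \sigma(\dbar\sigma)^j$ to low order to guess the pattern, then prove it by the recursion $u = \sigma + \sigma\dbar u$.
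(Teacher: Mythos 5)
Your computational route to \eqref{eq:nablau} (expand $u=\sigma+\sigma\dbar u$ and apply $\nabla$) can in principle be completed, but your sketch stops at ``I expect the bookkeeping to show'', and the inputs you list ($\sigma^2=0$, $\dbar^2=0$, $D^2=0$) are not enough to close it. Carrying the expansion out gives $\nabla u=\id-Q-Q\,\dbar u+\sigma\,\dbar\nabla u$, and to arrive at $\id-Q+u\nabla Q$ one must know that $Q\,\dbar u=0$ (and, when solving the resulting implicit equation, that $Q$ and $\nabla Q$ annihilate everything with values in $\bigoplus_{k\geq 1}\Hom(F,F^{-k})$). That is exactly the statement that $Q$ and $\nabla Q$ take values in $\Hom(F^0,F)$, which the paper feeds into the computation through \eqref{eq:alphaidbeta}. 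So \eqref{eq:nablau} is not a formal consequence of the antiderivation rules; it hinges on the values claim, and that is precisely where your proposal is vaguest.

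For the values claim you argue from $Q^0\defeq\id-a^0\sigma^0-\sigma^0a^0$ being concentrated in source degree $0$, but $Q=\id-D\sigma$ involves the full $D$ and the full $\sigma=\sigma^0(\id+a'\sigma^0)^{-1}$, and the passage from $Q^0$ to $Q$ is the actual content of the paper's Lemma~\ref{lmaQ0}: one needs $\sigma=\sigma^0(D\sigma^0+Q^0)^{-1}$ and then $Q=Q^0-\sigma DQ^0$, together with $Q^0a^0=0$ and $DQ^0a^0=0$; your ``check degree by degree'' never produces these identities. Moreover, your assertion that $\nabla Q$ inherits the property ``since $\nabla$ preserves this'' is false: $\nabla Q$ contains the term $\pm Qa$, and $Qa^0$ has source $F^{-1}$ unless one first knows $Qa^0=0$ -- which again comes from \eqref{eq:QQ0} and \eqref{eq:Q0a0}. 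Finally, for $\codim\mathcal{F}>0$ your SEP argument misses the point: $Q$ is a smooth form on $X\setminus Z$ (built from $\sigma$, which only lives there), so there is nothing to extend; the issue is to prove the vanishing on $X\setminus Z$ in the first place. When $\codim\mathcal{F}>0$ the local complexes are pointwise exact at every level there, so $Q^0=0$, and then $Q=Q^0-\sigma DQ^0=0$; without \eqref{eq:QQ0} (or an equivalent computation showing $D\sigma=\id$ outside $Z$), exactness of the resolutions alone does not give $Q=0$.
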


\begin{proof}[Proof of Proposition~\ref{prop:Rasm}]
    On $X\setminus Z$, $U=u$, and consequently it follows by \eqref{eq:Rdef}
    and \eqref{eq:nablau} that $R|_{X\setminus Z} = Q-u\nabla Q$.
    Since almost semi-meromorphic currents have the SEP,
    $R'$ must be the almost semi-meromorphic extension of $Q-u\nabla Q$, provided it exists.
    By Proposition-Definition~\ref{prop:URdef}, $u$ has an extension to $X$ as an almost semi-meromorphic current, and
    by a similar argument, the same holds for $Q$ since almost semi-meromorphic currents are closed under multiplication by smooth forms.
    By Proposition~\ref{prop:dbarAsmExtension}, also $\nabla Q$ admits an almost semi-meromorphic extension, and since
    almost semi-meromorphic currents form an algebra, we conclude that $Q-u\nabla Q$ admits such an extension as well.
    By \eqref{eq:asmExtension}, we thus have that
    \begin{equation} \label{eq:Rprim}
        R' \defeq \lim_{\epsilon \to 0} \chi_\epsilon(Q-u\nabla Q).
    \end{equation}
    By Lemma~\ref{lmaQ}, it then follows that $R'$ takes values in $\Hom(F^0,F)$.
    Thus, by \eqref{eq:Udef}, \eqref{eq:Rdef}, \eqref{eq:nablau}, \eqref{eq:Rprim},
    and \eqref{eq:residue}, it follows that
    \[
        R-R' = \lim_{\epsilon \to 0} \dbar\chi_\epsilon \wedge  u = R(U).
    \]
    Finally, if $\codim \mathcal{F} > 0$, then $Q=0$ by the last part of Lemma~\ref{lmaQ},
    and consequently, $R' = 0$ by \eqref{eq:Rprim}.
\end{proof}

In the proof of Theorem~\ref{thm:R}, we will make use of singularity subvarieties associated
with a twisted resolution, which is a straight-forward generalization of the corresponding
subvarieties considered for a global resolution in for example \cite{AW1}*{Section~3}.
For a fixed $\alpha$, let $a^0_{\alpha,k}$ denote the part of $a_\alpha^0$ going between
$F_\alpha^{-k} \to F_\alpha^{-(k-1)}$.
Let $Z_\alpha^k \subseteq \opens_\alpha$ denote the analytic set where $a^0_{\alpha,k}$ does not have optimal rank,
i.e., $Z_\alpha^k$ is the set of $x$ such that the pointwise rank of $a_{\alpha,k}^0(x)$ is lower than its generic rank $r_k$.
Alternatively, $Z_\alpha^k$ is the analytic subset defined by the $r_k\times r_k$-minors of $a^0_{\alpha,k}$.

The $Z_\alpha^k$ are independent of the choice of resolution,
since they coincide (locally) with the corresponding sets defined for any minimal resolution.
It follows that $Z^k = \cup_\alpha Z_\alpha^k$ is an analytic subset of $X$.
Note that $Z^1=Z$, where $Z$ is defined above as the smallest analytic subset such that $\mathcal{F}|_{X\setminus Z}$ is a vector bundle.

These subvarieties may also be defined in a slightly different way, which allows for connections to local algebra.
Given a ring $R$ and a morphism $\varphi$ of free $R$-modules, we may consider its so-called \emph{Fitting ideal}
$I(\varphi)$, which is the ideal of $r\times r$-minors of $\varphi$, where $r = \rank \varphi$ is the largest size of
a non-vanishing minor.
By the alternative description of $Z_\alpha^k$ above, if $x \in \opens_\alpha$, it follows that
\begin{equation} \label{eq:Zkgerms}
	(Z^k_\alpha)_x = Z(I((a^0_{\alpha,k})_x)),
\end{equation}
where $( )_x$ denotes germs at $x$.

Since $(F_\alpha,a^0_\alpha)$ is a locally free resolution of $\mathcal{F}|_{\opens_\alpha}$, if
$x \in \opens_\alpha$, then $((F_{\alpha})_x,(a^0_\alpha)_x)$ is a free resolution of $\mathcal{F}_x$.
By the Buchsbaum-Eisenbud criterion for exactness, \cite{Eis}*{Theorem~20.9}, $\depth I((a^0_{\alpha,k})_x) \geq k$,
and since $\holo_{X,x}$ is Cohen-Macaulay, this is equivalent to that $\codim Z(I((a^0_{\alpha,k})_x)) \geq k$.
Hence, using \eqref{eq:Zkgerms}, it follows that
\[
\codim Z_\alpha^k \geq k.
\]
In a similar way, \cite{Eis}*{Corollary 20.12} translates into the inclusion
\[
    Z_\alpha^{k+1} \subseteq Z_\alpha^k.
\]
It follows that
\begin{equation} \label{eq:codimZk}
    \codim Z^k \geq k,
\end{equation}
and
\begin{equation} \label{eq:Zkinclusion}
    Z^{k+1} \subseteq Z^k.
\end{equation}
Note that $(\sigma^0)^\ell_{\ell+1}$ is smooth outside of $Z^{\ell+1}$.
Thus, it follows by \eqref{eq:sigmadef} and \eqref{eq:Zkinclusion} that
\begin{equation} \label{eq:sigmaSmooth}
    \sigma^\ell_\bullet \text{ is smooth outside of $Z^{\ell+1}$.}
\end{equation}

\begin{proof}[Proof of Theorem~\ref{thm:R}]
We begin by proving part (a), i.e., that $R$ takes values in $\Hom(F^0,F)$,
which is equivalent to proving that $R^\ell_\bullet = 0$ for $\ell \geq 1$.
By Proposition~\ref{prop:Rasm}, $R^\ell_\bullet = R(U)^\ell_\bullet$ for $\ell \geq 1$.
We will prove that $R(U)^\ell_\bullet = 0$ for $\ell=1$, the case $\ell>1$ follows in the same way.
By \eqref{eq:udef} and \eqref{eq:Udef}, it follows that $R(U)^1_\bullet = \sum_{m \geq 1} R([\sigma(\dbar\sigma)^{m-1}])^1_\bullet$. Here and throughout the proof we use the brackets to emphasize that we are considering the almost semi-meromorphic extension.
Thus, it suffices to prove that $R([\sigma(\dbar\sigma)^{m-1}])^1_\bullet = 0$ for $m \geq 1$, which we will prove by induction over $m$.
Applying $\dbar$ to $\sigma^2=0$ and using that $\sigma$ has degree $1$, we get that $0 = \dbar(\sigma^2) = (\dbar\sigma)\sigma-\sigma\dbar\sigma$, so $\sigma(\dbar\sigma)^{m-1} = (\dbar\sigma)^{m-1} \sigma$,
and we may thus equivalently prove that
\begin{equation} \label{eq:R1Induction}
    R([(\dbar\sigma)^{m-1}\sigma])^1_\bullet = 0
\end{equation}
for $m \geq 1$.
By \eqref{eq:sigmaSmooth}, $\sigma^1_\bullet$ is smooth outside of $Z^2$, so $\supp R([\sigma])^1_\bullet \subseteq Z^2$ by \eqref{eq:resSupport}.
Thus, \eqref{eq:R1Induction} holds in the basic case $m=1$ by the dimension principle, Proposition~\ref{prop:dimPrinciple}, and \eqref{eq:codimZk}
since $R([\sigma])^1_\bullet$ has bidegree $(0,1)$.

We may thus assume by induction that \eqref{eq:R1Induction} holds for $m$.
Note that since $\sigma^0$ takes values in $\bigoplus_\ell \Hom(F^{-\ell},F^{-(\ell+1)})$,
and $a^{\geq 1}$ takes values in $\bigoplus_{\ell \leq k} \Hom(F^{-\ell},F^{-k})$, it follows that
$\sigma$ takes values in $\bigoplus_{\ell < k} \Hom(F^{-\ell},F^{-k})$.
In particular, $R([(\dbar\sigma)^{m-1}\sigma])^1_\bullet$ takes values in
$\bigoplus_{k \geq m+1} \Hom(F^{-1},F^{-k})$.
In view of \eqref{eq:sigmaSmooth} and \eqref{eq:residueSmooth}, we thus have outside of $Z^{m+2}$ that
\[ R([(\dbar\sigma)^m\sigma])^1_\bullet = \sum_{\ell \geq m+1} (\dbar\sigma)^\ell_\bullet R([(\dbar\sigma)^{m-1}\sigma])^1_\ell. \]
By the induction assumption, this current vanishes outside of $Z^{m+2}$, and since
$R([(\dbar\sigma^m)\sigma])^1_\bullet$ is a pseudomeromorphic $(0,m+1)$-current, with support on $Z^{m+2}$,
which has codimension $\geq m+2$ by \eqref{eq:codimZk}, it vanishes by the dimension principle.
\medskip

We now prove part (c). By part (a), $R^\ell_\bullet=0$ for $\ell \geq 1$ and since $\nabla R = 0$ it follows that $R a^0 = 0$
since \[ 0 = (\nabla R)^1_\bullet =  a R^1_\bullet + R^1_\bullet a^1 -R^0_\bullet a^0. \]

It remains to prove parts (b) and (d).
By Proposition~\ref{prop:Rasm}, $(R')^\ell_\bullet = 0$ for $\ell \geq 1$.
In addition, the conditions $(R')^0_k = 0$ for $k < \codim \mathcal{F}$
and that $R'$ has support on $\supp \mathcal{F}$ are non-trivial only when $\codim \mathcal{F}> 0$,
but then $R'=0$ by Proposition~\ref{prop:Rasm}, and hence these conditions are indeed trivially verified.
Thus, it remains to prove parts (b) and (d), assuming that $R=R(U)$.

To prove that $\supp R(U) \subseteq \supp \mathcal{F}$, we may without loss of generality
assume that $\codim \mathcal{F} > 0$, so that $\supp \mathcal{F}=Z=Z^1$.
Since $\sigma^0$ and consequently $\sigma$ and $U$ are smooth outside of $Z$,
it then follows by \eqref{eq:resSupport} that indeed $\supp R(U) \subseteq Z = \supp \mathcal{F}$.
Since $R(U)^0_k$ is a sum of pseudomeromorphic $(0,k')$-currents with $k' \leq k$, with support on $Z$, it follows by the dimension
principle that $R(U)^0_k =0$ for $k < \codim \mathcal{F}$.
\end{proof}
Let us now return to the proof of Lemma~\ref{lmaQ}, which relies on the following technical lemma.
\begin{lemma} \label{lmaQ0}
    Define $Q^0 \defeq \id-a^0\sigma^0-\sigma^0 a^0$.
    Then $Q^0$ and $DQ^0$ take values in $\Hom(F^0,F)$,
    \begin{equation} \label{eq:Q0a0}
        Q^0 a^0 = 0, \quad DQ^0 a^0 = 0,
    \end{equation}
    and
    \begin{equation} \label{eq:sigma0Q0}
            \sigma = \sigma^0(D\sigma^0+Q^0)^{-1}.
    \end{equation}
    Furthermore, the element $Q = \id - D\sigma$ defined in Lemma~\ref{lmaQ} equals
    \begin{equation} \label{eq:QQ0}
        Q = Q^0-\sigma DQ^0.
    \end{equation}
\end{lemma}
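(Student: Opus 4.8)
The plan is to work directly from the definitions, treating $\sigma^0$, $a^0$ as elements of the various $C^\bullet$-groups and using the algebraic identities $(\sigma^0)^2 = 0$, $(a^0)^2 = 0$, and $a^0 \sigma^0 a^0 = a^0$ which hold by construction. First I would establish that $Q^0 = \id - a^0\sigma^0 - \sigma^0 a^0$ takes values in $\Hom(F^0,F)$: since $a^0$ takes values in $\bigoplus_\ell \Hom(F^{-\ell},F^{-\ell+1})$ and $\sigma^0$ in $\bigoplus_\ell \Hom(F^{-\ell},F^{-\ell-1})$, the composition $\sigma^0 a^0$ maps $F^{-\ell}$ to $F^{-\ell}$, and $a^0\sigma^0$ likewise; restricted to $F^0$, the term $\sigma^0 a^0$ vanishes on $F^0$ (no $a^0$ out of degree $0$... wait—$a^0: F^0 \to F^1$? no, the complex sits in nonpositive degrees, so $a^0: F^0 \to 0$), hence $Q^0|_{F^0} = \id - a^0\sigma^0$, which still takes values in $F^0$, so $Q^0$ takes values in $\Hom(F^0, F^0) \subseteq \Hom(F^0,F)$; for the full statement "$Q^0$ takes values in $\Hom(F^0,F)$" one checks that on $F^{-\ell}$ with $\ell \geq 1$, $Q^0$ is not needed to vanish — rather the claim is about which Hom-components are nonzero, so I'd reread the convention in Section~\ref{ssect:notation}: $Q^0 = (Q^0)^0_\bullet$ means $Q^0$ only has a nonzero component as a map \emph{out of} $F^0$. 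This requires $a^0\sigma^0 + \sigma^0 a^0 = \id$ on $F^{-\ell}$ for $\ell \geq 1$, i.e., that $\sigma^0$ is a genuine homotopy inverse to the identity in positive degrees, which is exactly the defining property of the Moore–Penrose inverse on the exact part of the complex outside $Z$ — here one uses that $(F_\alpha, a^0_\alpha)$ is a \emph{resolution}, so it is exact in degrees $\leq -1$ over $\opens_\alpha'$, and the minimal inverse then satisfies $a^0\sigma^0 + \sigma^0 a^0 = \id$ there by a standard linear-algebra computation.

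Next, for \eqref{eq:Q0a0}, compute $Q^0 a^0 = a^0 - a^0\sigma^0 a^0 - \sigma^0 (a^0)^2 = a^0 - a^0 - 0 = 0$ using $a^0\sigma^0 a^0 = a^0$ and $(a^0)^2 = 0$; then $DQ^0 \cdot a^0 = D(Q^0 a^0) \pm Q^0 D a^0$, and since $Q^0 a^0 = 0$ and $D a^0 = \delta a^0 + a a^0 - (-1)^{\deg} a^0 a$ relates to the twisting-cochain equation — more carefully, one uses $D(Q^0 a^0) = (DQ^0) a^0 + (-1)^{\deg Q^0} Q^0 (Da^0)$ from \eqref{eq:Dderivation}, and I need $Q^0 D a^0 = 0$ as well; this should follow because $Da^0$, by the decomposition of the twisting-cochain equation, lands in the image of $a^0$ up to terms killed by $Q^0$ — I'd verify this using \eqref{eq:twisted0}–\eqref{eq:twisted2} directly. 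For \eqref{eq:sigma0Q0}, I would expand $D\sigma^0 = \delta\sigma^0 + a\sigma^0 - (-1)^{\deg\sigma^0}\sigma^0 a = \delta\sigma^0 + a\sigma^0 + \sigma^0 a$ (since $\deg\sigma^0 = -1$), split $a = a^0 + a'$, and observe $D\sigma^0 + Q^0 = \id + \delta\sigma^0 + a'\sigma^0 + \sigma^0 a'$; comparing with \eqref{eq:sigmadef}, $\sigma = \sigma^0(\id + a'\sigma^0)^{-1}$, I need to show $\sigma^0(D\sigma^0 + Q^0) = \sigma^0(\id + a'\sigma^0)$, i.e., $\sigma^0(\delta\sigma^0 + \sigma^0 a') = 0$; the second term vanishes by $(\sigma^0)^2 = 0$, and $\sigma^0 \delta\sigma^0 = 0$ because $\delta\sigma^0$ has \v{C}ech degree $1$ and... actually $\delta$ of a $C^0$-element: $\delta\sigma^0 \in C^1$, and the sum defining $\delta$ on a $0$-cochain is \emph{empty} (the sum runs $k=1$ to $p=0$), so $\delta\sigma^0 = 0$. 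That simplifies things considerably: $D\sigma^0 = a^0\sigma^0 + \sigma^0 a^0 + a'\sigma^0 + \sigma^0 a'$, hence $D\sigma^0 + Q^0 = \id + a'\sigma^0 + \sigma^0 a'$, and $\sigma^0(D\sigma^0 + Q^0) = \sigma^0 + \sigma^0 a'\sigma^0 = \sigma^0(\id + a'\sigma^0)$, giving \eqref{eq:sigma0Q0} after inverting.

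Finally, for \eqref{eq:QQ0}: from \eqref{eq:sigma0Q0}, $\sigma(D\sigma^0 + Q^0) = \sigma^0$, so applying $D$ and using \eqref{eq:Dderivation} with $\deg\sigma = -1$ gives $(D\sigma)(D\sigma^0 + Q^0) - \sigma D(D\sigma^0 + Q^0) = D\sigma^0$; now $D(D\sigma^0) = 0$ since $D^2 = 0$, so $D(D\sigma^0 + Q^0) = DQ^0$, whence $(D\sigma)(D\sigma^0 + Q^0) = D\sigma^0 + \sigma DQ^0$. Writing $D\sigma = \id - Q$ and $D\sigma^0 + Q^0 = (\id - Q^0) + (D\sigma^0 + 2Q^0 - \id)$ — rather, I'd use $D\sigma^0 = (D\sigma^0 + Q^0) - Q^0$, so the left side becomes $(\id - Q)(D\sigma^0 + Q^0) = (D\sigma^0 + Q^0) - Q(D\sigma^0 + Q^0)$, and the equation reads $(D\sigma^0 + Q^0) - Q(D\sigma^0 + Q^0) = (D\sigma^0 + Q^0) - Q^0 + \sigma DQ^0$, i.e., $Q(D\sigma^0 + Q^0) = Q^0 - \sigma DQ^0$; right-multiplying by $(D\sigma^0 + Q^0)^{-1}$ and then right-multiplying the target by the same won't directly give \eqref{eq:QQ0} unless $Q^0 - \sigma DQ^0$ is already of the form $(\text{something})(D\sigma^0 + Q^0)$ — so instead I expect the cleaner route is to substitute \eqref{eq:sigma0Q0} directly into $Q = \id - D\sigma$ and expand $D\sigma = D(\sigma^0(D\sigma^0 + Q^0)^{-1})$ via the derivation property, keeping track of the geometric-series expansion of the inverse; the term-by-term bookkeeping is routine but the signs (degree $-1$ of $\sigma^0$, the $(-1)^{\deg}$ in \eqref{eq:Dderivation}) are where errors creep in. \textbf{The main obstacle} I anticipate is precisely this sign-tracking in \eqref{eq:QQ0}, together with verifying that the formal inverses $(D\sigma^0 + Q^0)^{-1}$ and $(\id + a'\sigma^0)^{-1}$ are genuinely nilpotent perturbations of $\id$ so that all manipulations are legitimate — this uses that $a'\sigma^0$ and the relevant pieces strictly decrease Hom degree while $Q^0$ preserves the "out of $F^0$" structure, so the relevant operators are nilpotent on the finite-length complexes.
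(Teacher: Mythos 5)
Your overall route is the same as the paper's: you use exactness of the local resolutions outside $Z$ together with the Moore--Penrose properties to see that $Q^0$ is concentrated in $\Hom(F^0,F^0)$ and that $Q^0a^0=0$; you observe that $\delta$ annihilates $0$-cochains, so $D\sigma^0=\id-Q^0+a'\sigma^0+\sigma^0a'$; and you obtain \eqref{eq:QQ0} by applying $D$ to the relation \eqref{eq:sigma0Q0} and using the derivation property, with nilpotency of the negative-degree perturbations justifying the formal inverses. However, two steps do not go through as written, and one claim of the lemma is never addressed.

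First, the inference ``$\sigma^0(D\sigma^0+Q^0)=\sigma^0(\id+a'\sigma^0)$, giving \eqref{eq:sigma0Q0} after inverting'' is not valid: in a noncommutative algebra, $\sigma^0 A=\sigma^0 B$ does not imply $\sigma^0A^{-1}=\sigma^0B^{-1}$. The correct argument (which is what the paper means by ``Since $(\sigma^0)^2=0$, it follows from \eqref{eq:sigmadef}'') is to expand $(D\sigma^0+Q^0)^{-1}=(\id+a'\sigma^0+\sigma^0a')^{-1}$ as a Neumann series: after left multiplication by $\sigma^0$, every word containing a factor $\sigma^0a'$ is killed by $(\sigma^0)^2=0$, so only the words $(a'\sigma^0)^k$ survive and one gets exactly $\sigma^0(\id+a'\sigma^0)^{-1}=\sigma$. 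Second, for \eqref{eq:QQ0} you abandon a line of argument that in fact succeeds: your identity $Q(D\sigma^0+Q^0)=Q^0-\sigma DQ^0$ finishes the proof, because $Q^0-\sigma DQ^0$ takes values in $\Hom(F^0,F)$ and $(D\sigma^0+Q^0)^{-1}=(\id+a'\sigma^0+\sigma^0a')^{-1}$ is a nilpotent perturbation of the identity with values in $\bigoplus_{k\geq1}\Hom(F,F^{-k})$, so by \eqref{eq:alphaidbeta} right multiplication by this inverse leaves $Q^0-\sigma DQ^0$ unchanged; this absorption trick is precisely how the paper proceeds (there applied inside the computations \eqref{eq:Dsigma} and \eqref{eq:DDsigma0Q0inv}), so the ``routine but sign-sensitive bookkeeping'' you defer is actually a one-line application of \eqref{eq:alphaidbeta}. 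Finally, the assertion that $DQ^0$ takes values in $\Hom(F^0,F)$ is part of the statement but absent from your plan; the paper gets it from $DQ^0=aQ^0-Q^0a=a'Q^0-Q^0a^1$ (using $a^0Q^0=0$ since $a^0$ vanishes on $F^0$, $Q^0a^0=0$, and $Q^0a^k=0$ for $k\geq2$ for degree reasons), and this same formula yields $DQ^0a^0=\mp Q^0a^0a^1=0$ via \eqref{eq:twisted1}, which is the cleaner route to the second half of \eqref{eq:Q0a0} that you only sketch.
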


In the proofs of Lemma~\ref{lmaQ} and Lemma~\ref{lmaQ0},
we will use that if
\[
	\alpha,\beta \in C^\bullet(\cover',\currs^{0,\bullet}(\Homs(F,F))),
\]
and $\alpha$ takes values in $\Hom(F^0,F)$ and $\beta$ is nilpotent and takes values in $\bigoplus_{k \geq 1} \Hom(F,F^{-k})$, then
$\alpha\beta = 0$, and
\begin{equation} \label{eq:alphaidbeta}
    \alpha(\id+\beta)^{-1} = \alpha(\id-\beta+\beta^2-\dots) = \alpha.
\end{equation}

\begin{proof}[Proof of Lemma~\ref{lmaQ0}]
It follows from the properties defining $\sigma^0$ that
\[
	Q^0\in C^0(\cover',\currs^{0,0}(\Homs(F^0,F^0))),
\]
and it may alternatively be defined by $Q_\alpha^0 = \id_{F_\alpha^0}- (a_\alpha^0 \sigma_\alpha^0)^0_0$,
and it has the property that $Q^0 a^0 = 0$.
Since $a = a^0 + a^{\geq 1}$, we thus have that
\begin{equation} \label{eq:Dsigma0}
	D \sigma^0 =
	a \sigma^0 + \sigma^0 a =
	\id - Q^0 + a^{\geq 1} \sigma^0 + \sigma^0 a^{\geq 1}.
\end{equation}
Since $(\sigma^0)^2 = 0$, it follows from \eqref{eq:sigmadef} that \eqref{eq:sigma0Q0} holds.
Thus,
\begin{equation}
\begin{aligned} \label{eq:Dsigma}
	D \sigma &=
	D \sigma^0 (D \sigma^0 + Q^0)^{-1} - \sigma^0 D (D \sigma^0 + Q^0)^{-1} \\ &=
	(D \sigma^0 + Q^0 - Q^0)(D \sigma^0 + Q^0)^{-1} - \sigma^0 D (D \sigma^0 + Q^0)^{-1} \\ &=
	\id - Q^0 - \sigma^0 D (D \sigma^0 + Q^0)^{-1},
\end{aligned}
\end{equation}
where we have used \eqref{eq:Dderivation} in the first equality, and in the third equality,
we have used that $Q^0 (D\sigma^0+Q^0)^{-1} = Q^0$ by \eqref{eq:alphaidbeta} since $D\sigma^0+Q^0=\id+a^{\geq 1}\sigma^0+\sigma^0 a^{\geq 1}$ by \eqref{eq:Dsigma0}.

Since $Q^0$ takes values in $\Hom(F^0,F^0)$ and $Q^0 a^0 = 0$,
$D Q^0 = a^{\geq 1} Q^0 - Q^0 a^1$, and in particular, $D Q^0$
takes values in $\Hom(F^0,F)$. Since $a^1 a^0 = a^0 a^1$ by \eqref{eq:twisted1}, it follows that
$DQ^0 a^0=0$, so \eqref{eq:Q0a0} is proved.
Using that $D\alpha^{-1}=-\alpha^{-1}(D\alpha) \alpha^{-1}$, we also get that
\begin{equation}
\begin{aligned} \label{eq:DDsigma0Q0inv}
	D (D \sigma^0 + Q^0)^{-1} &=
	-(D \sigma^0 + Q^0)^{-1} D Q^0 (D \sigma^0 + Q^0)^{-1} \\ &=
	-(D \sigma^0 + Q^0)^{-1} D Q^0,
\end{aligned}
\end{equation}
where we in the second equality used that $D Q^0 (D \sigma^0 + Q^0)^{-1} = D Q^0$,
for the same reasons as for $Q^0$ in the previous paragraph.
The formula \eqref{eq:QQ0} then follows by combining \eqref{eq:sigma0Q0}, \eqref{eq:Dsigma}
and \eqref{eq:DDsigma0Q0inv}.
\end{proof}

\begin{proof}[Proof of Lemma~\ref{lmaQ}]
Since $Q^0$ and $DQ^0$ take values in $\Hom(F^0,F)$, it follows by \eqref{eq:QQ0} that also $Q$ takes values
in $\Hom(F^0,F)$. By \eqref{eq:Q0a0} and \eqref{eq:QQ0} it follows that $Q a^0 = 0$,
and hence also $\nabla Q$ takes values in $\Hom(F^0, F)$.
Note that $Q^0_\alpha$ is the orthogonal projection onto the orthogonal complement of $\im a^0_\alpha : F_\alpha^{-1} \to F_\alpha^0$.
If $\codim \mathcal{F} > 0$, it follows that $Q^0$ vanishes.
Hence, in that case $Q$ vanishes as well by \eqref{eq:QQ0}.

It remains to prove \eqref{eq:nablau}.
Since by definition, $Q=\id-D\sigma$, we get that
\begin{equation} \label{eq:iddbarsigma}
\id-\dbar\sigma=\nabla\sigma+Q.
\end{equation}
Thus, by \eqref{eq:udef},
\begin{equation} \label{eq:uAlternative}
    u = \sigma(\nabla\sigma+Q)^{-1}.
\end{equation}
It then follows that
\begin{align} \label{eq:nablasigmaQinv}
	\nabla(\nabla \sigma + Q)^{-1} &=
	-(\nabla \sigma + Q)^{-1} \nabla Q (\nabla \sigma + Q)^{-1} =
	-(\nabla \sigma + Q)^{-1} \nabla Q,
\end{align}
where we in the second equality have used that $\nabla Q(\nabla\sigma+Q)^{-1}=\nabla Q$
by \eqref{eq:alphaidbeta} and \eqref{eq:iddbarsigma}.
Using \eqref{eq:uAlternative} and \eqref{eq:nablasigmaQinv}, we get that
\begin{align*}
	\nabla u &=
	\nabla \sigma (\nabla \sigma + Q)^{-1} -
	\sigma \nabla(\nabla \sigma + Q)^{-1}
	\\ &=
	(\nabla \sigma + Q - Q)(\nabla \sigma + Q)^{-1} +
	u \nabla Q
	\\ &=
	\id - Q + u \nabla Q,
\end{align*}
where we in the third equality have used that $Q(\nabla\sigma+Q)^{-1}=Q$ by
\eqref{eq:alphaidbeta} and \eqref{eq:iddbarsigma}.
\end{proof}

\begin{remark}
    In the case that $\mathcal{F}$ has a global locally free resolution of finite length $(E,\varphi)$,
    as in Example~\ref{ex:globalRes}, then $\sigma=\sigma^0$ since $(\sigma^0)^2=0$.
    Assuming additionally that $\codim \mathcal{F} > 0$, the forms and currents $\sigma$, $u$, $U$
    and $R$ then define global $\End(E)$-valued currents, which coincide with the ones
    defined in \cite{AW1}, and the calculations here reduce to calculations similar to the ones
    in \cite{AW1}*{Section~2}, since then $Q^0$, $Q$ and $R'$ vanish.

    In \cite{AW1}, a way of handling the case when $\codim \mathcal{F} = 0$ is only mentioned briefly by a comment
    in \cite{AW1}*{Corollary~2.4}, while the remainder of the paper is written under the assumption that
    $\codim \mathcal{F} > 0$. This is a natural assumption in the context of that paper,
    and it is not really specified how $R$ is defined in the case $\codim \mathcal{F} = 0$.
    For us, it is natural to not have any assumptions on the codimension, so we have
    made sure to provide a definition which works also this case.
    The main issue is that in \cite{AW1}*{Section~2}, the equality
    which in our notation may be written as $a^0 \sigma^0 + \sigma^0 a^0 = \id$ is being used,
    but this equality only holds when $\codim \mathcal{F} > 0$, i.e., when $Q^0=0$.
    We then use different formulas to define $u$ and $R$ in \eqref{eq:udef} and \eqref{eq:Rdef}
    than in \cite{AW1}. The formulas we use as a definition actually appear as alternative formulas
    in \cite{AW1}, since they are equivalent when $\codim \mathcal{F} > 0$.
    Our definitions are made so that $R$ gets the crucial properties
    that it takes values in $\Hom(F^0,F)$ and is $\nabla$-cohomologous to the identity mapping.

    In the case that $\codim \mathcal{F} = 0$, with a global resolution $(E,\varphi)$,
    then $Q_0 \neq 0$, and using that $a^1$ is the identity, it follows that $DQ_0 = 0$,
    so by \eqref{eq:QQ0}, $Q=Q_0$, which outside of $Z$ equals the orthogonal projection onto
    the orthogonal complement of $\im \varphi_1$. By Proposition~\ref{prop:Rasm} and \eqref{eq:Rprim}, it follows that
    \[
        R = R(U)+[Q_0]+[u\dbar Q_0],
    \]
    where we by $[Q_0]$ and $[u \dbar Q_0]$ denote the almost semi-meromorphic extensions.
    The duality result \cite{AW1}*{Corollary~2.4} states that a section $\phi$ of $E_0$
    belongs to $\im \varphi_1$ if and only if $\varphi_1$ belongs generically to
    $\im \varphi_1$ and $R(U)\phi = 0$.
    Note that $[Q_0]$ is the only part of $R$ with values in $\Hom(F^0,F^0)$,
    and that $[Q_0]\phi = 0$ if and only if $\phi$ belongs generically to $\im \varphi_1$,
    and it holds that if $[Q_0] \phi = 0$, then $[u\dbar Q_0] \phi = 0$. Thus, with our definition of $R$,
    it follows that \cite{AW1}*{Corollary~2.4} may be reformulated succinctly as:
    $\phi$ belongs to $\im \varphi_1$ if and only if $R\phi = 0$.
\end{remark}
\begin{example}[Explicit description of $R^0$ and $R^1$]
Assume that $(F,a)$ is a twisted resolution of $\mathcal{F}$, and that $\codim \mathcal{F} > 0$.

We consider first the components $R^0_\alpha$ of \v{C}ech degree $0$.
Since $\sigma_\alpha = \sigma^0_\alpha$ equals the form $\sigma$ defined in \cite{AW1} associated with $(F_\alpha,a^0_\alpha)$
on $\opens_\alpha$, and comparing the definitions in this section with the ones in \cite{AW1}, it follows that
$u_\alpha^0$, $U_\alpha^0$ and $R_\alpha^0$ equal the corresponding forms and currents $u$, $U$, and $R$ defined in \cite{AW1}*{Section~2}
associated with $(F_\alpha,a^0_\alpha)$.

We consider next the components $R^1_{\alpha\beta}$ of \v{C}ech degree $1$.
Using that $\nabla R = 0$ by Proposition-Definition~\ref{prop:URdef}, and considering a component of \v{C}ech degree $1$,
and using that $R a^0 = 0$, we obtain that
\begin{equation*}
    0 = (\nabla R)_{\alpha \beta} =
    a_\alpha^0 R_{\alpha \beta}^1 + a^1_{\alpha\beta} R^0_\beta -
    R^0_\alpha a^1_{\alpha\beta} - \dbar R^1_{\alpha\beta},
\end{equation*}
i.e.,
\begin{equation} \label{eq:R1comparisonFormula}
    R^0_\alpha a^1_{\alpha\beta} - a^1_{\alpha\beta} R^0_\beta = a^0_\alpha R^1_{\alpha\beta} - \dbar R^1_{\alpha\beta}.
\end{equation}
Note that $a^1_{\alpha\beta} : (F_\beta,a^0_\beta) \to (F_\alpha,a^0_\alpha)$ is a morphism of complexes, and \eqref{eq:R1comparisonFormula}
in fact coincides with the comparison formula from \cite{Lar} in this situation. Indeed, as explained above, $R^0_\alpha$ and $R^0_\beta$ are
the residue currents associated with $(F_\alpha,a^0_\alpha)$ and $(F_\beta,a^0_\beta)$, respectively.
Using that $(\sigma^0_\alpha)^2 = 0$, it follows that $\sigma^0_\alpha(\dbar\sigma^0_\alpha)^m\sigma^0_\alpha = 0$ for any $m \geq 1$,
and then a calculation using \eqref{eq:sigmadef} and \eqref{eq:udef} yields that if we let $u_\alpha^0 = \sigma^0_\alpha + \sigma^0_\alpha\dbar\sigma^0_\alpha+\dots$,
then $U_{\alpha\beta}^1 = u_\alpha^0 a^1_{\alpha\beta} u_\beta^0$. Since $R^1_{\alpha\beta} = R(U_{\alpha\beta}^1)$ by Proposition~\ref{prop:Rasm},
it follows that $R^1_{\alpha\beta}$ equals the current $M$ defined in \cite{Lar}*{(3.1)} associated with the morphism of complexes $a^1_{\alpha\beta}$,
and \eqref{eq:R1comparisonFormula} then coincides with the comparison formula \cite{Lar}*{Theorem~3.2} (see in particular \cite{Lar}*{(3.4)}).
\end{example}

\section{Twisted resolutions and residue currents}
\label{section:main-theorem}
Let $(F,a)$ be a twisted resolution of a coherent $\holo_X$-module $\mathcal{F}$, and let $G$ be a holomorphic vector bundle. Note that the operator $D$ on $C^\bullet(\cover,\Homs^\bullet(F,G))$ can be decomposed as $D = D^0 + D^{\geq 1}$, where
\[
	D^0: C^p(\cover,\Homs^r(F,G)) \to
	C^p(\cover,\Homs^{r+1}(F,G)),
\]
and
\[
	D^{\geq 1}: C^p(\cover,\Homs^r(F,G)) \to
	\bigoplus_{j \geq 1} C^{p+j}(\cover,\Homs^{r-j+1}(F,G)).
\]
The triple $(C^\bullet(\cover,\Homs^\bullet(F,G)),D^0,D^{\geq 1})$ is referred to as a \emph{twisted complex}, and it was shown in \cite{TT1}*{Theorem~2.9} that
\[
	H^k\left(
	\bigoplus_{p+r=\bullet}
	C^p(\cover,\Homs^r(F,G))
    \right) \cong
	\Ext^k(\mathcal{F},G).
\]
Our objective now is to make this isomorphism explicit by representing $\Ext^k(\mathcal{F},G)$ as the $k$th cohomology of $\Hom(\mathcal{F},\currs^{0,\bullet}(G))$ as explained in the introduction.
We will begin by defining the operator $v$ that appears in the statement of Theorem~\ref{thm:main}.

\subsection{The operator $v$} \label{ssection:vdef}

Let $(\rho_\alpha)$ be a partition of unity subordinate to $\cover$. We define an operator $v: C^p(\cover,\currs^{0,q}(\Homs^r(F,G))) \to C^{p-1}(\cover,\currs^{0,q}(\Homs^r(F,G)))$ as
\[
	(v f)_{\alpha_0 \dots \alpha_{p-1}} \defeq
	\sum_{\alpha} \rho_\alpha
	f_{\alpha \alpha_0 \dots \alpha_{p-1}}
\]
for $p \geq 1$ and $vf \defeq 0$ otherwise. Here $\rho_\alpha f_{\alpha \alpha_0 \dots \alpha_{p-1}}$, which is defined on $\opens_{\alpha \alpha_0 \dots \alpha_{p-1}}$, is extended by 0 to $\opens_{\alpha_0 \dots \alpha_{p-1}}$.

A calculation yields that if $f \in C^p(\cover,\currs^{0,q}(\Homs^r(F,G)))$, then
\begin{equation*}
	Dvf = f - vDf
\end{equation*}
if $p \geq 1$ and $Dvf=0$ otherwise. Moreover, for $j \geq 1$, it follows by induction
and the fact that $D\dbar=-\dbar D$ that
\begin{equation*}
	Dv(\dbar v)^j f =
	(\dbar v)^j f + v(\dbar v)^{j-1} \dbar f -
	v(\dbar v)^j Df
\end{equation*}
if $p \geq j+1$ and $D v(\dbar v)^j f=0$ otherwise.
By $(\dbar v)^j f$, we mean the composition $\dbar\circ v$ applied $j$ times to $f$.

Let now $f \in C^\bullet(\cover,\currs^{0,\bullet}(\Homs^\bullet(F,G)))$ be an arbitrary element, and as above we let $f^j$ denote the parts of $f$ of \v{C}ech degree $j$. By computing $Dv(f-f^0)$ and $Dv(\dbar v)^j \left(f - \sum_{\ell=0}^j f^\ell \right)$, we obtain the formulas
\begin{equation}
\label{eq:Dv1}
	Dvf = f - vDf - f^0 + v D f^0,
\end{equation}
and
\begin{equation}
\label{eq:Dv2}
\begin{aligned}
	Dv(\dbar v)^j f &=
	(\dbar v)^j f + v(\dbar v)^{j-1} \dbar f -
	v(\dbar v)^j Df
    \\ &- (\dbar v)^j f^j - v(\dbar v)^{j-1} \dbar f^j +
	\sum_{\ell=0}^j \left( v(\dbar v)^j D f^\ell \right)
\end{aligned}
\end{equation}
for any $j \geq 1$.

\smallskip

We denote by $\iota$ the inclusion
\[
	\Hom(\mathcal{F},\currs^{0,q}(G)) \stackrel{\cong}{\to} H^0( \oplus_{p+r=\bullet} C^p(\cover,\Homs^r(F,\currs^{0,q}(G)))) \to C^0(\cover,\Homs^r(F,\currs^{0,q}(G))).
\]
Note that if $f' \in \Hom(\mathcal{F},\currs^{0,q}(G))$, then $\iota f'$ takes values in $\Hom(F^0,G)$ and $D (\iota f') = \iota f' a^0 = 0$.

Given $f \in C^0(\cover,\Homs(F,\currs^{0,\bullet}(G)))$ that takes values in $\Hom(F^0,G)$ and is such that $f a^0 = 0$, we define
$v_0 f \in \Hom(\mathcal{F},\currs^{0,\bullet}(G))$ by $v_0 f := \sum \rho_\alpha \tilde{f}_\alpha$, where $\tilde{f}_\alpha$ denotes the
morphism in $\Homs(\mathcal{F},G)(U_\alpha)$ determined by $f_\alpha$ induced by the surjection $F^0_\alpha \to \mathcal{F}|_{U_\alpha}$. Indeed,
$\tilde{f}_\alpha$ is well-defined since $f a^0 = 0$.
Unwrapping the definitions, it follows that
\begin{equation*}
    (\iota v_0 f)_{\alpha_0} = \sum_{\alpha} \rho_\alpha f_\alpha a^1_{\alpha\alpha_0} .
\end{equation*}
By a similar calculation as above,
\begin{equation} \label{eq:Dv1prim}
	\iota v_0 f = f - vD^1f.
\end{equation}

If $f$ is furthermore $D^1$-closed, $v_0 f$ is independent of the choice of partition of unity $(\rho_\alpha)$, and $v_0 f|_{U_\alpha} = \tilde{f}_\alpha$.
In that case, $f$ is in fact $D$-closed, and $v_0 f$ is the element corresponding to $f$ through the inverse isomorphism
\[
	H^0( \oplus_{p+r=\bullet} C^p(\cover,\Homs^r(F,\currs^{0,q}(G)))) \stackrel{\cong}{\to} \Hom(\mathcal{F},\currs^{0,q}(G)).
\]

\subsection{Cohomology of graded complexes}

In the proof of Theorem~\ref{thm:main}, we will make repeated use of
a basic result about cohomology of certain graded complexes.
Graded complexes generalize double complexes, and we will make use
of this result both for double complexes and for twisted complexes.

Let $(A^{p,q},d_{p,q,r})$ be a \emph{graded complex} of abelian groups, i.e., for $p,q \in \Z$, $A^{p,q}$ are abelian groups,
and for any $r \in \Z$  there are morphisms
$d_{p,q,r} : A^{p,q} \to A^{p+r,q+1-r}$ such that if
\[ A^k \defeq \bigoplus_{p+q=k} A^{p,q}, \quad d_k \defeq \bigoplus_{\substack{p+q=k \\ r\in \Z}} \, d_{p,q,r} : A^k \to A^{k+1},\]
then $(A^\bullet,d)$ defines a complex.
We will always assume that $A^{p,q}=0$ for $p<0$ or $q<0$, and furthermore
that $d_{p,q,r} = 0$ for $r < 0$.
Note that a graded complex defines a filtered complex with the filtration given
by
\[
	F^j A^k = \bigoplus_{\substack{p+q=k \\ p \geq j}} A^{p,q}.
\]
We write $d^0 = \bigoplus_{p,q} d_{p,q,0}$ and $d' = \bigoplus_{r \geq 1} d_{p,q,r}$.
Since $d_{p,q,r} = 0$ for $r<0$, it follows that $d=d^0+d'$ and
$(d^0)^2 = 0$, so for a fixed $p$, $(A^{p,\bullet},d^0)$ defines a complex.

\begin{proposition} \label{prop:cohomologyGradedComplex}
    Let $(A^{p,q},d_{p,q,r})$ be a graded complex of abelian groups, such
    that $A^{p,q}= 0$ if $p<0$ or $q<0$, and $d_{p,q,r} = 0$ for $r < 0$.
    Assume furthermore that $H^q(A^{p,\bullet},d^0) = 0$ for $q>0$.
    Then the inclusion $A^{k,0} \hookrightarrow A^k$ induces an isomorphism
    \[
        H^k(H^0(A^{\bullet,\bullet},d^0),d') \overset{\cong}{\to} H^k(A^\bullet,d).
    \]
\end{proposition}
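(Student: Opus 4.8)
The plan is to run the standard ``peel off the top row'' argument (the abstract de Rham / Weil lemma, in the spirit of the de Rham computation in \cite{BT}*{\S II.9}); this is a hands-on proof that the spectral sequence of the filtration $F^\bullet$ degenerates after the first step.

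First I would fix notation. Decompose $d = \sum_{r \geq 0} d_r$ according to how much each summand raises the $p$-degree, so that $d_0 = d^0$ and $d' = \sum_{r \geq 1} d_r$. Set $B^p \defeq H^0(A^{p,\bullet},d^0) = \ker\bigl(d^0 \colon A^{p,0}\to A^{p,1}\bigr) \subseteq A^{p,0}$. For $\phi \in B^p$ one has $d_0\phi = 0$ and $d_r\phi \in A^{p+r,1-r}=0$ for $r \geq 2$, so $d\phi = d_1\phi \in A^{p+1,0}$; applying the same observation to $d\phi$ and using $d^2=0$ shows $d_1\phi \in B^{p+1}$ and $d_1^2\phi = 0$. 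Hence $(B^\bullet, d_1)$ is a complex, it is exactly $(H^0(A^{\bullet,\bullet},d^0),d')$, and the inclusion $\iota\colon B^k \hookrightarrow A^k$ is a morphism of complexes $(B^\bullet,d_1)\to(A^\bullet,d)$; it remains to see that $\iota$ is a quasi-isomorphism.

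For surjectivity on cohomology, let $c \in A^k$ be a cocycle, write $c = \sum_q c^{(q)}$ with $c^{(q)} \in A^{k-q,q}$, and let $q_0$ be the largest index with $c^{(q_0)} \neq 0$. If $q_0 > 0$, then, since $d'$ does not raise the $q$-degree, the component of $dc$ in $A^{k-q_0,q_0+1}$ is $d^0 c^{(q_0)}$, so $d^0 c^{(q_0)}=0$; by the hypothesis $H^{q_0}(A^{k-q_0,\bullet},d^0)=0$ there is $b \in A^{k-q_0,q_0-1}$ with $d^0 b = c^{(q_0)}$, and $c - db$ is cohomologous to $c$ with strictly smaller top $q$-degree. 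Iterating (the process terminates because the top $q$-degree strictly decreases and $A^{p,q}=0$ for $q<0$) reduces to $c \in A^{k,0}$, where $dc=0$ forces $d^0 c = 0$, i.e.\ $c \in B^k$. Injectivity is the same reduction one degree lower: if $\phi \in B^k$ is a $d_1$-cocycle with $\iota(\phi)=dc$ for some $c \in A^{k-1}$, the top-degree reduction lets us assume $c \in A^{k-1,0}$; then $dc = d^0 c + d_1 c$ with $d^0 c \in A^{k-1,1}$ and $d_1 c \in A^{k,0}$, so $dc = \phi$ gives $d^0 c = 0$ and $d_1 c = \phi$, i.e.\ $c \in B^{k-1}$ and $\phi$ is a $d_1$-coboundary.

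The only delicate point is the bookkeeping with the two gradings: one must check that $d'$ never raises the $q$-degree (so the top $q$-degree genuinely drops at each reduction step), that the primitives $b$ produced by column acyclicity land in the correct groups, and that the induction terminates thanks to $A^{p,q}=0$ for $q<0$. Beyond that the argument is a routine diagram chase; alternatively one could invoke convergence and degeneration of the spectral sequence of the filtered complex $(A^\bullet,d,F^\bullet)$, but the direct argument is short and self-contained.
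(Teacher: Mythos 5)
Your proof is correct, and it is exactly the route the paper has in mind: the paper does not write out an argument but remarks that the proposition follows from the spectral sequence of the filtered complex or "by straight-forward diagram chases," and your peeling-off-the-top-$q$-degree reduction is precisely that direct diagram chase, with the key bidegree bookkeeping (only $d_1$ survives on $\ker d^0 \subseteq A^{p,0}$, and only $d^0$ contributes in the top $q$-degree) carried out correctly.
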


This follows from the general theory of spectral sequences associated with a filtered complex, see for example \cite{Gun3}*{Theorem C.8},
but may also be proven directly by arguments involving straight-forward diagram chases.

A special case of a graded complex is a double complex $(A^{p,q},d',d'')$ i.e., when $d_{p,q,r} = 0$ for $r \neq 0,1$,
so $d^0=d'' : A^{p,q} \to A^{p,q+1}$, $d' : A^{p,q} \to A^{p+1,q}$ and $d'd'' = - d''d'$.
In this case, one may construct a new double complex with the same total complex by switching roles of the indices.
Applying Proposition~\ref{prop:cohomologyGradedComplex}, one obtains the following well-known consequence:
Let $(A^{p,q},d',d'')$ be a double complex and assume that
$H^p(A^{\bullet,q},d') = 0$ for $p > 0$ and that $H^q(A^{p,\bullet}) = 0$ for $q > 0$.
Then there are isomorphisms
\begin{equation} \label{eq:doubleComplexCohomologyIsomorphism}
    H^k(H^0(A^{\bullet,\bullet},d''),d') \cong H^k(A^\bullet,d) \cong
    H^k(H^0(A^{\bullet,\bullet},d'),d'').
\end{equation}
Given an element $[[x']] \in H^k(H^0(A^{\bullet,\bullet},d''),d')$, one may find an element $y \in A^{k-1}$ such that
\begin{equation} \label{eq:dPotential}
	d y = x' - x'',
\end{equation}
for some $x'' \in A^{0,k}$. Then, the composed isomorphism between the left-most and right-most group in \eqref{eq:doubleComplexCohomologyIsomorphism} is given by
\begin{equation} \label{eq:doubleComplexExplicitIsomorphism}
    [[x']] \mapsto [[x'']].
\end{equation}

\subsection{Proof of Theorem~\ref{thm:main}}

In the proof of Theorem~\ref{thm:main} we will make use of the following
useful result, which is essentially a consequence of the stalkwise injectivity
of currents by Malgrange.

\begin{lemma} \label{lma:extCurrentsVanish}
    Let $\mathcal{F}$ be a coherent $\holo_X$-module, and let
    $G$ be a locally free $\holo_X$-module. Then
    \[ \Ext^k(\mathcal{F},\currs^{0,q}(G)) = 0 \]
    for $k \geq 1$.
\end{lemma}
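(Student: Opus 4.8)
The plan is to deduce the vanishing of $\Ext^k(\mathcal{F},\currs^{0,q}(G))$ from the fact that the sheaf $\currs^{0,q}(G)$ is \emph{injective} as an $\holo_X$-module, or at least acyclic for $\Hom(\mathcal{F},-)$ in the relevant sense. The key input is a theorem of Malgrange: the sheaf $\currs^{0,q}$ of $(0,q)$-currents (and more generally $\currs^{0,q}(G)$ for $G$ locally free) is an injective $\holo_X$-module, because stalkwise it is divisible/injective over the local ring $\holo_{X,x}$. Concretely, for any coherent $\mathcal{F}$, locally on a Stein (or polydisc) neighborhood one has a free resolution $E_\bullet \to \mathcal{F}$, and by Malgrange's result the complex $\Hom(E_\bullet,\currs^{0,q}(G))$ is exact in positive degrees; this is exactly the statement that $\Exts^k(\mathcal{F},\currs^{0,q}(G)) = 0$ for $k \geq 1$.

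First I would recall/cite the precise form of Malgrange's injectivity statement for currents, which gives $\Exts^k(\mathcal{F},\currs^{0,q}(G)) = 0$ for all $k\geq 1$ and all coherent $\mathcal{F}$. Next I would pass from the sheaf $\Exts$ to the global $\Ext$ via the local-to-global spectral sequence
\[
    E_2^{p,q'} = H^p(X,\Exts^{q'}(\mathcal{F},\currs^{0,q}(G))) \Rightarrow \Ext^{p+q'}(\mathcal{F},\currs^{0,q}(G)).
\]
Since $\Exts^{q'} = 0$ for $q' \geq 1$, this degenerates and gives $\Ext^k(\mathcal{F},\currs^{0,q}(G)) \cong H^k(X,\Homs(\mathcal{F},\currs^{0,q}(G)))$. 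It then remains to show that the sheaf $\Homs(\mathcal{F},\currs^{0,q}(G))$ is acyclic, i.e.\ has vanishing higher cohomology on $X$. This follows because $\currs^{0,q}(G)$ is a fine sheaf (it admits partitions of unity, being a module over the sheaf of smooth functions), hence so is $\Homs(\mathcal{F},\currs^{0,q}(G))$, and fine sheaves are acyclic; therefore $H^k(X,\Homs(\mathcal{F},\currs^{0,q}(G))) = 0$ for $k \geq 1$, completing the proof.

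An alternative, more self-contained route avoids the spectral sequence: one takes a locally free resolution of $\mathcal{F}$ locally, or better, uses a twisted resolution $(F,a)$ globally, and observes that $\Ext^k(\mathcal{F},\currs^{0,q}(G))$ is computed by the total complex of $C^\bullet(\cover,\Homs^\bullet(F,\currs^{0,q}(G)))$; one then runs the spectral sequence of this double-indexed complex, using Malgrange's stalkwise exactness to kill the $\Homs$-direction cohomology and fineness of $\currs^{0,q}(G)$ to kill the \v{C}ech-direction cohomology, leaving nothing in total degree $k\geq 1$. Either way, the two ingredients are the same.

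The main obstacle is being careful about \emph{which} precise statement of Malgrange one invokes and ensuring it applies to $\currs^{0,q}(G)$ for a vector bundle $G$ rather than just to scalar currents: one needs that $\currs^{0,q}(G)$ is stalkwise injective over $\holo_{X,x}$, which reduces to the scalar case by choosing a local frame for $G$, since injectivity is preserved under finite direct sums. The other point requiring a little care is confirming that $\Homs(\mathcal{F},-)$ applied to a fine sheaf yields a fine (or at least soft/acyclic) sheaf; this holds because the $\mathcal{E}_X$-module structure on $\currs^{0,q}(G)$ induces one on $\Homs_{\holo_X}(\mathcal{F},\currs^{0,q}(G))$, so partitions of unity still act. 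Once these two facts are pinned down, the rest is a formal consequence of standard homological algebra.
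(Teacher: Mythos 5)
Your proposal is correct and follows essentially the same route as the paper: Malgrange's stalkwise injectivity of $\currs^{0,q}$ to kill the sheaf $\Exts$ in positive degrees, fineness of $\Homs(\mathcal{F},\currs^{0,q}(G))$ to kill higher sheaf cohomology, and the local-to-global spectral sequence for $\Ext$ to conclude. The extra care you note (reducing to scalar currents via a local frame of $G$, and the fine-sheaf structure on the $\Homs$ sheaf) is exactly the implicit content of the paper's argument.
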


In particular, if $\mathcal{F}$ is coherent, then
$\currs^{0,\bullet}(G)$ is an acyclic resolution of $G$
for the functor $\Hom(\mathcal{F},\bullet)$, so
\begin{equation}
    \Ext^k(\mathcal{F},G) \cong H^k(\Hom(\mathcal{F},\currs^{0,\bullet}(G))).
\end{equation}

\begin{proof}
    By \cite{Mal}*{Theorem~VII.2.4}, $\currs^{0,q}$ is stalkwise injective,
    and consequently $\currs^{0,q}(G)$ is as well.
    Thus,
    \[ \Ext^\ell_{\holo_{X,x}}(\mathcal{F}_x,\currs^{0,q}(G)_x) = 0 \]
    for $\ell \geq 1$.
    Since $\mathcal{F}$ is coherent,
    \[
        \Exts^{\ell}(\mathcal{F},\currs^{0,q}(G))_x \cong \Ext^\ell_{\holo_{X,x}}(\mathcal{F}_x,\currs^{0,q}(G)_x),
    \]
    cf., e.g., \cite{Har}*{Proposition~III.6.8}\footnote{The result in \cite{Har} is stated for noetherian schemes,
    but essentially the same proof translates into our setting of complex manifolds. Indeed, the statement is local, so we may replace $X$
    by a small neighborhood where $\mathcal{F}$ admits a (locally) free resolution. The remainder of the proof proceeds as in \cite{Har}.}.
    In addition, since $\currs^{0,q}$ is fine, $\Homs(\mathcal{F},\currs^{0,q}(G))$ is as well,
    so
    \[ H^k(X,\Homs(\mathcal{F},\currs^{0,q}(G))) = 0. \]
    To conclude, $H^r(X,\Exts^s(\mathcal{F},\currs^{0,q}(G))) = 0$ for $r+s=k$,
    so $\Ext^k(\mathcal{F},\currs^{0,q}(G)) = 0$ by the local to global
    spectral sequence of $\Ext$.
\end{proof}

Before the proof of Theorem~\ref{thm:main}, we will also make some preliminary calculations.
Let $\eta \in C^\bullet(\cover,\currs^{0,\bullet}(\Homs^\bullet(F,G)))$ be an element such that $\eta a^0 = 0$ and that $\eta$ takes values in $\Hom(F^0,G)$.
Then $D\eta = D^1\eta$. In particular, it follows that
\begin{equation} \label{eq:DetaJ}
	D\eta^j = (D^1 \eta^j) = (D\eta)^{j+1}.
\end{equation}
By \eqref{eq:Dv1} and \eqref{eq:Dv2}, we have that
\begin{equation} \label{eq:DvDbarvEta0}
	D \sum_{j \geq 1} v(\dbar v)^{j-1} \eta^j = \sum_{j \geq 1} (\dbar v)^{j-1} \eta^j + \sum_{j \geq 2} v(\dbar v)^{j-2} \dbar \eta^j - \sum_{j \geq 1} v(\dbar v)^{j-1} D\eta^j.
\end{equation}
Thus, if $D\eta = D^1 \eta$, it follows from \eqref{eq:DetaJ} and \eqref{eq:DvDbarvEta0} that
\begin{equation} \label{eq:DvDbarvEta}
	D \sum_{j \geq 1} v(\dbar v)^{j-1} \eta^j = \sum_{j \geq 1} (\dbar v)^{j-1} \eta^j - \sum_{j \geq 1} v(\dbar v)^{j-1} (\nabla \eta)^{j+1}.
\end{equation}

\begin{proof}[Proof of Theorem~\ref{thm:main}]
We first prove that \eqref{eq:twisted-iso-formula} is a morphism of complexes, i.e., that
\begin{equation}
	-\dbar \sum_j v_0 (\dbar v)^j (\xi R)^j = \sum_j v_0 (\dbar v)^j ( (D\xi) R)^j.
\end{equation}
Since the morphism $\iota$ defined in Section~\ref{ssection:vdef} is injective, and $\dbar(\dbar v)^j = 0$ for $j \geq 1$,
it suffices by \eqref{eq:Dv1prim} to prove that
\begin{equation}
	-\dbar (\xi R)^0 + \dbar \sum_j v D^1 (\dbar v)^j (\xi R)^j = \sum_j (\dbar v)^j ((D \xi) R)^j
	- \sum_j v D^1(\dbar v)^j ( (D\xi) R)^j.
\end{equation}
Note that since $\dbar \xi = 0$ and $\nabla R = 0$, it follows that $(D \xi) R = \nabla (\xi R)$. In addition, since
$R a^0 = 0$, and $R$ takes values in $\Hom(F^0,G)$, using \eqref{eq:DetaJ}, it thus suffices to prove that
if $\eta \in C^\bullet(\cover,\currs^{0,\bullet}(\Homs^\bullet(F,G)))$ is such that $D\eta = D^1\eta$, then
\begin{equation} \label{eq:morphismOfComplexesEq}
	-\dbar \eta^0 + \dbar \sum_j v D (\dbar v)^j \eta^j = \sum_j (\dbar v)^j (\nabla \eta)^j
	-\sum_j v D(\dbar v)^j (\nabla \eta)^j.
\end{equation}
To prove this, we begin with the following calculation of the left-hand side,
\begin{align*}
	-\dbar \eta^0 + \dbar \sum_j v D (\dbar v)^j \eta^j = -\dbar \eta^0 + \dbar v D \eta^0 - \dbar v \dbar \sum_{j \geq 1} Dv(\dbar v)^{j-1} \eta^j = \\
	-\dbar \eta^0 + \dbar v (D\eta)^1 - (\dbar v)\dbar \eta^1 + \sum_{j \geq 1} (\dbar v)^{j+1} (\nabla \eta)^{j+1} = \sum_{j \geq 0} (\dbar v)^j (\nabla \eta)^j
\end{align*}
where we have used that $\dbar D = -D\dbar$ in the first equality, and \eqref{eq:DetaJ} and \eqref{eq:DvDbarvEta} in the second equality.
To prove \eqref{eq:morphismOfComplexesEq}, it thus remains to prove that the second term in the right-hand side vanishes, i.e.,
\begin{equation} \label{eq:finalTermVanishes}
	\sum_j v D(\dbar v)^j (\nabla \eta)^j = 0.
\end{equation}
By a similar calculation as above, we obtain that
\begin{align*}
	\sum_j D(\dbar v)^j (\nabla \eta)^j =  D(\nabla \eta)^0 - \dbar \sum_{j \geq 1} D v (\dbar v)^{j-1} (\nabla \eta)^j
	= \\
	(D(\nabla \eta))^1 - \dbar(\nabla \eta)^1 + \sum_{j \geq 1} (\dbar v)^j (\nabla(\nabla \eta))^{j+1}  = \sum_{j \geq 0} (\dbar v)^j (\nabla^2 \eta)^{j+1} = 0
\end{align*}
which proves \eqref{eq:finalTermVanishes}.

\medskip

It remains to prove that \eqref{eq:twisted-iso-formula} induces the isomorphism \eqref{eq:twisted-iso}.
Consider the double complex
\[
	E^{q,\ell} \defeq
	\bigoplus_{p+r=\ell}
	C^p(\cover,\currs^{0,q}(\Homs^r(F,G)))
\]
with differentials $D$ and $-\dbar$, and total differential $\nabla = D - \dbar$.

We have that
\begin{equation}
\begin{aligned} \label{eq:HqEql}
	H^q(E^{\bullet,\ell}) &=
	\bigoplus_{p+r=\ell} \prod_{(\alpha_0,\dots,\alpha_p)}
	H^{0,q}(\opens_{\alpha_0 \dots \alpha_p},
	\Homs^r(F_{\alpha_p},G)) \\
&= \left\{\begin{array}{cc} 0 & \text{ for $q > 0$,} \\
    \bigoplus_{p+r=\ell} C^p(\cover,\Homs^r(F,G)) & \text{ for $q=0$,}
    \end{array}
\right.
\end{aligned}
\end{equation}
where the case $q > 0$ in the second equality follows by Cartan's Theorem B, since $\opens_{\alpha_0 \dots \alpha_p}$ is Stein
and $\Homs^r(F_{\alpha_p},G)$ is coherent.

We claim that
\begin{equation} \label{eq:HlEql}
H^\ell(E^{q,\bullet}) = \left\{\begin{array}{cc} 0 & \text{ for $\ell > 0$,} \\
    \Hom(\mathcal{F}, \currs^{0,q}(G)) & \text{ for $\ell=0$.}
        \end{array}
\right.
\end{equation}
Thus, combining \eqref{eq:HqEql} and \eqref{eq:HlEql}, we obtain an isomorphism \eqref{eq:twisted-iso} by \eqref{eq:doubleComplexCohomologyIsomorphism}.

To prove \eqref{eq:HlEql} we fix $q$ and consider
\[
	A^{p,r} \defeq
	C^p(\cover,\currs^{0,q}(\Homs^r(F,G))) =
	\prod_{(\alpha_0,\dots,\alpha_p)}
	\Homs(F_{\alpha_p}^{-r}, \currs^{0,q}(G))
	(\opens_{\alpha_0 \dots \alpha_p}),
\]
which provides a decomposition $E^{q,\ell} = \bigoplus_{p+r=\ell} A^{p,r}$.
By Lemma~\ref{lma:extCurrentsVanish} and the long exact sequence of $\Ext$,
for $\opens$ open,
$\Homs(\bullet,\currs^{0,q}(G))(\opens)$
is an exact
functor on the category of coherent $\holo_{\opens}$-modules.
Thus,
\[
	H^r(A^{p,\bullet}) \cong
	\prod_{(\alpha_0,\dots,\alpha_p)}
	H^r(\Homs(F_{\alpha_p}^{-\bullet},\currs^{0,q}(G))(\opens_{\alpha_0 \dots \alpha_p})) = 0
\]
for $r > 0$. In addition,
\[
	H^0(A^{p,\bullet}) = \prod_{(\alpha_0,\dots,\alpha_p)}
	\Homs(\mathcal{F}, \currs^{0,q}(G))
	(\opens_{\alpha_0 \dots \alpha_p}).
\]
We may thus apply Proposition~\ref{prop:cohomologyGradedComplex} to compute $H^\ell(E^{q,\bullet})$.
A calculation yields that the differential induced by $D^{\geq 1}$ on $H_{D^0}^0(A^{\bullet,\bullet}) \cong C^\bullet(\cover,\Homs(\mathcal{F},\currs^{0,q}(G)))$
is just the ordinary \v{C}ech coboundary.
Thus
\[ H^\ell(E^{q,\bullet}) \cong H_{D^{\geq 1}}^\ell(H_{D^0}^0(A^{\bullet,\bullet})) \cong \check{H}^\ell(\cover,\Homs(\mathcal{F}, \currs^{0,q}(G))),\]
which vanishes for $\ell > 0$ since $\Homs(\mathcal{F}, \currs^{0,q}(G))$ is fine, and is isomorphic to $\Hom(\mathcal{F}, \currs^{0,q}(G))$ for $\ell = 0$,
which proves \eqref{eq:HlEql}.

We finally prove that the isomorphism is given by \eqref{eq:twisted-iso-formula}. To this end, let $\xi$ be a $D$-closed element of $\bigoplus_{p+r=k} C^p(\cover,\Homs^r(F,G))$.
Let $x' := \xi$ and $x'' := \sum_j (\dbar v)^j (\xi R)^j$. We claim that it suffices to find a $\nabla$-potential of $x' - x''$, i.e., that there exists a
$y \in \bigoplus_{p+r=\bullet} C^p(\cover,\Homs^r(F,\currs^{0,\bullet}(G)))$ such that \eqref{eq:dPotential} holds.
Indeed, $x''$ has \v{C}ech and Hom degree $0$, so it belongs to $E^{q,0}$. It then follows by \eqref{eq:dPotential} that $x''$ is
$D$-closed and $\dbar$-closed, and the class $[[x'']]$ corresponding to $x''$ in \eqref{eq:doubleComplexExplicitIsomorphism}
equals $v_0 x'' = \sum_j v_0 (\dbar v)^j (\xi R)^j$.

To find the $\nabla$-potential, we set $w = v + v (\dbar v) + v (\dbar v)^2 + \dots$, and we claim that
\begin{equation} \label{eq:nablaPotential}
	\nabla((-1)^k \xi U + w(\xi R)) =
	\xi - \sum_j (\dbar v)^j (\xi R)^j.
\end{equation}
To prove \eqref{eq:nablaPotential},
note first that since $\xi$ is $D$-closed and holomorphic, $\nabla \xi = 0$.
In addition, $R$ is $\nabla$-closed by Proposition-Definition~\ref{prop:URdef},
and thus it follows from \eqref{eq:nablaDerivation} that $\nabla (\xi R) = 0$, i.e., $D(\xi R) = \dbar (\xi R)$.
Since $R$ takes values in $\Hom(F^0,F)$ and $R a^0=0$ by Theorem~\ref{thm:R},
it follows that $D (\xi R)^j$ has \v{C}ech degree $j+1$, and hence for degree reasons it follows that $D (\xi R)^j = \dbar (\xi R)^{j+1}$.
Thus it follows from \eqref{eq:Dv1} that
\begin{equation} \label{eq:DvxiR1}
	Dv (\xi R) =
	\xi R - v \dbar(\xi R) - (\xi R)^0 +
	v \dbar (\xi R)^1.
\end{equation}
Moreover, from \eqref{eq:Dv2} it follows that
\begin{equation}
\begin{aligned}  \label{eq:DvxiR2}
	Dv(\dbar v)^j (\xi R) &=
	(\dbar v)^j (\xi R) +
	v(\dbar v)^{j-1} \dbar (\xi R) -
	v(\dbar v)^j \dbar (\xi R) \\ &-
	(\dbar v)^j (\xi R)^j -
	v(\dbar v)^{j-1} \dbar (\xi R)^j +
	v(\dbar v)^j \dbar (\xi R)^{j+1}
\end{aligned}
\end{equation}
for $j \geq 1$, since
\[
	\left( \sum_{\ell=0}^j v (\dbar v)^j D  (\xi R)^\ell \right) =
	\left( \sum_{\ell=0}^j v (\dbar v)^j \dbar (\xi R)^{\ell+1} \right) =
	v (\dbar v)^j \dbar (\xi R)^{j+1}.
\]

Using \eqref{eq:DvxiR1} and \eqref{eq:DvxiR2}, it follows that we have a telescoping sum
\begin{equation} \label{eq:nablawxiR}
	\nabla(w (\xi R)) = \xi R - \sum_j (\dbar v)^j (\xi R)^j.
\end{equation}
In addition, since $\nabla \xi = 0$ and $\deg \xi = k$, it follows by \eqref{eq:Rdef} that
$\nabla ((-1)^k \xi U) = \xi - \xi R$, and combining this with \eqref{eq:nablawxiR},
we obtain \eqref{eq:nablaPotential}.

\end{proof}

\begin{remark} \label{rem:dbarSign}
	For reasons of consistency with \cite{AW1} and later works concerning residue currents, we use a
	minus sign in front of $\dbar$ in the double complex $E^{\bullet,\bullet}$ in the proof above,
	which causes us to have to also equip the Dolbeault complex $\Hom(\mathcal{F},\currs^{0,\bullet}(G))$ in Theorem~\ref{thm:main} with
	the non-standard differential $-\dbar$. If we would skip the minus sign in front of $\dbar$ in the double complex $E^{\bullet,\bullet}$,
	it would alter signs in the definition of $R$, and Theorem~\ref{thm:main} would hold if $\Hom(\mathcal{F},\currs^{0,\bullet}(G))$
	was equipped with the differential $\dbar$.
\end{remark}

\subsection{Sheaf $\Exts$ and residue currents}
We end this section with a brief discussion on how one can use residue current to go between different representations of the sheaf $\Exts^k(\mathcal{F},G)$. This case offers no novelty since this is basically just a matter of putting some of the results of \cite{And1} into a global context and make appropriate interpretations so that it fits into the framework of twisted resolutions.

As usual, we let $(F,a)$ be a twisting cochain associated with $\mathcal{F}$. We have that for each index $\alpha$,
$a$ induces a complex
\begin{equation}
\label{eq:Homscomplex}
	0 \to \Homs(F_\alpha^0,G) \to \Homs(F_\alpha^{-1},G) \to \dots,
\end{equation}
where a section $\xi_\alpha$ of $\Homs(F_\alpha^{-k},G)$ is mapped to $\xi_\alpha a_\alpha^0$. It is well known that over each $\opens_\alpha$, $\Exts^k(\mathcal{F},G)$ can be computed as the $k$th cohomology of the complex \eqref{eq:Homscomplex}. Analogous to the discussion following Definition~\ref{def:twisting-cochain}, the cohomology sheaves $\mathcal{H}^k(\Homs(F_\alpha^\bullet,G))$ can be glued together over each intersection $\opens_{\alpha \beta}$ via the isomorphism induced by $a_{\alpha \beta}^1$, i.e.,
\begin{equation} \label{eq:Extsalphabeta}
    [\xi_\alpha] \mapsto [\xi_\alpha a_{\alpha \beta}^1].
\end{equation}
For simplicity, let us in the following result identify $\Exts^k(\mathcal{F},G)$ with this sheaf.
\begin{proposition}
\label{prop:Exts}
Let $\mathcal{F}$ be a coherent $\holo_X$-module, and let $G$ be a holomorphic vector bundle. Let $(F,a)$ be a twisted resolution of $\mathcal{F}$, and let $R$ be the associated residue current. Then there is a well-defined isomorphism
\begin{equation} \label{eq:Extsiso}
	\Exts^k(\mathcal{F},G) \overset{\cong}{\to}
	\mathcal{H}^k(\Homs(\mathcal{F},\currs^{0,\bullet}(G))),
\end{equation}
which over each $\opens_\alpha$ is given by
\begin{equation} \label{eq:ExtsisoFormula}
	[\xi_\alpha] \mapsto [\xi_\alpha R_\alpha^0].
\end{equation}
\end{proposition}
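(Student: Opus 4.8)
The plan is to prove the statement locally over each $\opens_\alpha$, using the double complex technology of \eqref{eq:doubleComplexCohomologyIsomorphism}--\eqref{eq:doubleComplexExplicitIsomorphism}, and then to check that the resulting local isomorphisms are compatible with the transition maps \eqref{eq:Extsalphabeta}, so that they patch to a morphism of sheaves which is then automatically an isomorphism. Fix $\alpha$ and consider the double complex $A^{p,q}\defeq\Homs(F_\alpha^{-p},\currs^{0,q}(G))$ over $\opens_\alpha$, with horizontal differential induced by precomposition with $a_\alpha^0$ and vertical differential $\dbar$ (with the usual sign adjustment making them anticommute). Since each $F_\alpha^{-p}$ is locally free, the $p$-th column is the Dolbeault resolution of $\Homs(F_\alpha^{-p},G)$, so $H^q(A^{p,\bullet})=0$ for $q>0$; and since $\currs^{0,q}$ is stalkwise injective by Malgrange (as in the proof of Lemma~\ref{lma:extCurrentsVanish}), $H^p(A^{\bullet,q})=\Exts^p(\mathcal{F},\currs^{0,q}(G))$ vanishes for $p>0$ and equals $\Homs(\mathcal{F},\currs^{0,q}(G))$ for $p=0$. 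Thus \eqref{eq:doubleComplexCohomologyIsomorphism} applies and yields an isomorphism $\mathcal{H}^k(\Homs(F_\alpha^\bullet,G))\cong\mathcal{H}^k(\Homs(\mathcal{F},\currs^{0,\bullet}(G)))$ over $\opens_\alpha$, computed by \eqref{eq:doubleComplexExplicitIsomorphism}.

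To identify this isomorphism with the formula \eqref{eq:ExtsisoFormula}, I would, given a cocycle $\xi_\alpha\in\Homs(F_\alpha^{-k},G)$ (so $\xi_\alpha a_\alpha^0=0$), exhibit the explicit potential $y\defeq\pm\xi_\alpha U_\alpha^0$, where $U_\alpha^0$ is the \v{C}ech-degree-$0$ component of the current $U$ of Proposition-Definition~\ref{prop:URdef}. Expanding \eqref{eq:Rdef} in \v{C}ech degree $0$ gives $R_\alpha^0=\id-a_\alpha^0U_\alpha^0-U_\alpha^0a_\alpha^0+\dbar U_\alpha^0$, and combining this with $\xi_\alpha a_\alpha^0=0$ shows, with a suitable sign (this being the \v{C}ech-degree-$0$ shadow of \eqref{eq:nablaPotential} with $w=0$), that the total differential of the double complex sends $y$ to $\xi_\alpha-\xi_\alpha R_\alpha^0$. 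Here $\xi_\alpha R_\alpha^0$ lies in $A^{0,k}$ by Theorem~\ref{thm:R}(a), it is annihilated by $a_\alpha^0$ on the right by Theorem~\ref{thm:R}(c) and hence descends to a section of $\Homs(\mathcal{F},\currs^{0,k}(G))$, and it is $\dbar$-closed because the \v{C}ech-degree-$0$ part of $\nabla R=0$ together with $Ra^0=0$ reads $\dbar R_\alpha^0=a_\alpha^0R_\alpha^0$, so that $\dbar(\xi_\alpha R_\alpha^0)=\pm\xi_\alpha a_\alpha^0R_\alpha^0=0$. Therefore \eqref{eq:doubleComplexExplicitIsomorphism} is exactly $[\xi_\alpha]\mapsto[\xi_\alpha R_\alpha^0]$; in particular this assignment is well defined on cohomology and bijective over $\opens_\alpha$, and since the argument uses only $R_\alpha^0=\id-(\nabla U)_\alpha^0$ and parts (a), (c) of Theorem~\ref{thm:R}, it is valid regardless of the codimension of $\mathcal{F}$.

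For the gluing, let $\varepsilon_\gamma\colon F_\gamma^0\to\mathcal{F}$ be the augmentation. Since $a_{\alpha\beta}^1$ extends $\id_{\mathcal{F}}$, over $\opens_{\alpha\beta}$ one has $\varepsilon_\alpha\circ(a_{\alpha\beta}^1)^0_0=\varepsilon_\beta$; as $\varepsilon_\beta$ is an epimorphism and both $\xi_\alpha R_\alpha^0$ and $\xi_\alpha R_\alpha^0 a_{\alpha\beta}^1$ descend along it (using $a_{\alpha\beta}^1a_\beta^0=a_\alpha^0a_{\alpha\beta}^1$ and $R_\alpha^0a_\alpha^0=0$), their descents coincide in $\Homs(\mathcal{F},\currs^{0,k}(G))$ over $\opens_{\alpha\beta}$. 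On the other hand, multiplying the comparison formula \eqref{eq:R1comparisonFormula} on the left by $\xi_\alpha$ and using $\xi_\alpha a_\alpha^0=0$ gives $\xi_\alpha R_\alpha^0a_{\alpha\beta}^1-\xi_\alpha a_{\alpha\beta}^1R_\beta^0=-\xi_\alpha\dbar R_{\alpha\beta}^1=\pm\dbar(\xi_\alpha R_{\alpha\beta}^1)$, where $\xi_\alpha R_{\alpha\beta}^1$ descends to $\mathcal{F}$ since $R_{\alpha\beta}^1a_\beta^0=0$ by Theorem~\ref{thm:R}(c); hence $\xi_\alpha R_\alpha^0a_{\alpha\beta}^1$ and $\xi_\alpha a_{\alpha\beta}^1R_\beta^0$ represent the same class in $\mathcal{H}^k(\Homs(\mathcal{F},\currs^{0,\bullet}(G)))$. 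Combining the two facts, the local isomorphism over $\opens_\alpha$ applied to $[\xi_\alpha]$ equals the one over $\opens_\beta$ applied to the transported class $[\xi_\alpha a_{\alpha\beta}^1]$, which is precisely the compatibility needed to patch the local isomorphisms into a well-defined sheaf isomorphism \eqref{eq:Extsiso}, given over each $\opens_\alpha$ by \eqref{eq:ExtsisoFormula}. As the authors indicate, the statement carries no new idea beyond \cite{And1}; the step I expect to be the main (though routine) obstacle is exactly this gluing verification, where the sign bookkeeping forced by the product conventions \eqref{eq:prod1}--\eqref{eq:prod2} and the need to keep straight the several local representations and the identifications between them make the comparison formula \eqref{eq:R1comparisonFormula} the essential tool.
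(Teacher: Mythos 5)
Your proposal is correct and follows essentially the same route as the paper: the heart of the argument --- well-definedness of the gluing, obtained from the comparison identity \eqref{eq:R1comparisonFormula} together with $\xi_\alpha a^0_\alpha=0$, giving $\xi_\alpha R^0_\alpha a^1_{\alpha\beta}-\xi_\alpha a^1_{\alpha\beta}R^0_\beta=\pm\dbar(\xi_\alpha R^1_{\alpha\beta})$ with the potential descending to $\mathcal{F}$ --- is exactly the paper's proof. The only difference is that where the paper simply cites \cite{And1} for the local isomorphism $[\xi_\alpha]\mapsto[\xi_\alpha R^0_\alpha]$ over $\opens_\alpha$, you rederive it with the paper's own tools (the local double complex with columns exact by Dolbeault and rows exact by Malgrange's stalkwise injectivity, plus the explicit potential $\pm\xi_\alpha U^0_\alpha$ and $R^0_\alpha=\id-(\nabla U)^0_\alpha$), which makes the argument self-contained but is not a genuinely different method.
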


\begin{proof}
From \cite{And1} it follows that for each $\alpha$ there is an isomorphism
\[
	\mathcal{H}^k(\Homs(F_\alpha^\bullet,G)) \overset{\cong}{\to}
	\mathcal{H}^k(\Homs(\mathcal{F},\currs^{0,\bullet}(G)))
\]
over $\opens_\alpha$ given by \eqref{eq:ExtsisoFormula}.
We note that when we move to another open set $\opens_\beta$, then the source of
\eqref{eq:Extsiso} is mapped via the  isomorphism \eqref{eq:Extsalphabeta}.
In the target of \eqref{eq:Extsiso}, the implicit isomorphism $\mathcal{F} \cong F^0_\alpha/(\im a^0_\alpha)$ on $\opens_\alpha$ is used,
and thus, when we move to another open set $\opens_\beta$, then $[\xi_\alpha R^0_\alpha]$ is mapped to $[\xi_\alpha R^0_\alpha a^1_{\alpha\beta}]$,
so to prove that the isomorphism given by \eqref{eq:ExtsisoFormula} is well-defined, we should prove that
\begin{equation} \label{eq:ExtsisoCompatible}
	[\xi_\alpha R_\alpha^0 a_{\alpha \beta}^1] =
	[\xi_\alpha a_{\alpha \beta}^1 R_\beta^0].
\end{equation}
By \eqref{eq:R1comparisonFormula} and the fact that $\xi_\alpha a^0_\alpha = 0$,
we have that
\[
	\xi_\alpha R_\alpha^0 a_{\alpha \beta}^1 -
	\xi_\alpha a_{\alpha \beta}^1 R_\beta^0 =
	\xi_\alpha a_\alpha^0 R_{\alpha \beta}^1 -
	\xi_\alpha \dbar  R_{\alpha \beta}^1 =
	\pm \dbar (\xi_\alpha R_{\alpha \beta}^1),
\]
and hence we conclude that \eqref{eq:ExtsisoCompatible} holds.
\end{proof}

\section{The cup product and Serre duality}
\label{sect:serre}
One context in which global Ext groups appear is in the context of Serre duality for coherent $\holo_X$-modules. For simplicity, we will in this section assume $X$ to be compact, but the discussion below also applies to non-compact $X$, provided one considers cohomology with compact support in one of the factors,
and impose appropriate Hausdorffness assumptions.
Let as before $\mathcal{F}$ be a coherent $\holo_X$-module.
Recall that Serre duality states that there is a perfect pairing
\begin{equation}
\label{eq:SerrePairing}
	H^{n-k}(X,\mathcal{F}) \times \Ext^k(\mathcal{F},\omega) \to \C.
\end{equation}
We now spell out how this pairing is realized in \cite{MalSerre}. We have that
\[
	\mathcal{F} \longrightarrow \mathcal{F} \otimes \forms^{0,0}
	\overset{\dbar}{\longrightarrow} \mathcal{F} \otimes \forms^{0,1}
	\overset{\dbar}{\longrightarrow} \dots
\]
is a fine resolution of $\mathcal{F}$. Thus $H^k(X,\mathcal{F}) \cong H^k(\Hom(\holo_X,\mathcal{F} \otimes \forms^{0,\bullet}))$.
Here we have used the fact that for a $\holo_X$-module $\mathcal{G}$, its global sections are given by $\Hom(\holo_X,\mathcal{G})$,
a representation of global sections that fits well with the discussion below.
We also have an isomorphism $H^n(\Hom(\holo_X,\currs^{n,\bullet})) \cong \C$ via integration of currents.
The pairing \eqref{eq:SerrePairing} is then realized as a pairing
\begin{equation} \label{eq:serrePairing1}
    H^{n-k}(\Hom(\holo_X,\forms^{0,\bullet} \otimes \mathcal{F})) \times
    H^k(\Hom(\mathcal{F},\currs^{n,\bullet})) \to
    H^n(\Hom(\holo_X,\currs^{n,\bullet})),
\end{equation}
which is induced by combining the composition $\Homs(\holo_X,\mathcal{F}) \times \Homs(\mathcal{F},\currs^{n,k}) \to \Homs(\holo_X,\currs^{n,k})$
with the wedge product $\forms^{0,n-k} \times \currs^{n,k} \to \currs^{n,n}$.
We will denote this pairing simply by $\wedge$.

On the other hand, it is also possible to realize the pairing \eqref{eq:SerrePairing} by representing the Ext groups as the cohomology of a twisted complex. Let $(F,a)$ be a twisted resolution of $\mathcal{F}$. Let $\omega$ denote the sheaf of holomorphic $n$-forms on $X$, and consider the following graded groups
\begin{equation}
\begin{aligned} \label{eq:ABCdef}
    B^{q,\ell} &\defeq \bigoplus_{p+r=\ell} C^p(\cover,\forms^{0,q}(\Homs^r(\holo_X,F))) \cong
                \bigoplus_{p+r=\ell} C^p(\cover,\Homs^r(\holo_X,\forms^{0,q}(F))), \\
    E^{q,\ell} &\defeq \bigoplus_{p+r=\ell} C^p(\cover,\currs^{0,q}(\Homs^r(F,\omega))) \cong
                \bigoplus_{p+r=\ell} C^p(\cover,\Homs^r(F,\currs^{n,q})), \\
    G^{q,\ell} &\defeq C^\ell(\cover,\currs^{0,q}(\Homs(\holo_X,\omega))) \cong
                C^\ell(\cover,\Homs(\holo_X,\currs^{n,q})).
\end{aligned}
\end{equation}
We may identify $\holo_X$ and $\omega$ with the corresponding twisting cochains concentrated in degree $0$.
In this way, $E^{q,\ell}$ and $G^{q,\ell}$ are groups of the form \eqref{eq:cechCurrentHom}.
In addition, $B^{q,\ell}$ may be viewed as a subgroup of groups of this form, by using the inclusion $\forms^{0,q} \subseteq \currs^{0,q}$.
Note that $B^{\bullet,\bullet}$, $E^{\bullet,\bullet}$, and $G^{\bullet,\bullet}$ are all double complexes with differentials $D$ and $-\dbar$.
Recall the calculations of $H^q(E^{\bullet,\ell})$, \eqref{eq:HqEql}, and $H^\ell(E^{q,\bullet})$, \eqref{eq:HlEql}, in the proof of Theorem~\ref{thm:main}.
Similar arguments yield similar expressions for the cohomology of $B^{\bullet,\bullet}$ and $G^{\bullet,\bullet}$,
where in the case of $B^{\bullet,\bullet}$, one may use that $\forms^{0,q}$ is stalkwise flat and fine.
Hence, we may apply \eqref{eq:doubleComplexCohomologyIsomorphism}, which yields isomorphisms
\begin{equation}
\begin{aligned} \label{eq:cechDeRham}
    H^k\left( \bigoplus_{p+r=\bullet} C^p(\cover,\Homs^r(\holo_X,F))\right) \overset{\cong}{\to} H^k( \Hom(\holo_X,\forms^{0,\bullet}(\mathcal{F}))), \\
    H^k\left( \bigoplus_{p+r=\bullet} C^p(\cover,\Homs^r(F,\omega))\right)  \overset{\cong}{\to} H^k(\Hom(\mathcal{F},\currs^{n,\bullet})), \\
    H^k(C^{\bullet}(\cover,\Homs(\holo_X,\omega))) \overset{\cong}{\to} H^k(\Hom(\holo_X,\currs^{n,\bullet})).
\end{aligned}
\end{equation}

Using the isomorphisms \eqref{eq:cechDeRham}, we can alternatively express \eqref{eq:serrePairing1} as a pairing
\begin{equation}
\begin{gathered} \label{eq:serrePairing3}
    H^{n-k}\left(\bigoplus_{p+r=\bullet} C^p(\cover,\Homs^r(\holo_X,F))\right)\times H^k\left( \bigoplus_{p+r=\bullet} C^p(\cover,\Homs^r(F,\omega))\right)
     \\ \to H^n(C^\bullet(\cover,\Hom(\holo_X,\omega))).
\end{gathered}
\end{equation}
We will now show that it is given by the following cup product. Let $x$ and $y$ be $D$-closed elements of $\bigoplus_{p+r=n-k} C^p(\cover,\Homs^r(\holo_X,F))$ and  $\bigoplus_{p+r=k} C^p(\cover,\Homs^r(F,\omega))$ respectively. We define $[x] \smile [y]$ as $[xy]$, where $xy$ is the product \eqref{eq:prod2}. It is easy to see that this is well defined since if, for example, $x = D \alpha$, then $xy = D(\alpha y)$, and hence $[xy] = 0$. So the product is independent of the choice of representatives.
\begin{proposition}
    Let $\Psi$ denote the isomorphisms in \eqref{eq:cechDeRham},
    and let
    \[
     [x] \in H^{n-k}\left(\bigoplus_{p+r=\bullet} C^p(\cover,\Homs^r(\holo_X,F))\right)
    \text{ and } [y] \in H^k\left( \bigoplus_{p+r=\bullet} C^p(\cover,\Homs^r(F,\omega))\right).
    \]
    Then
    \begin{equation}
        \Psi( [x] \smile [y] ) = \Psi([x]) \wedge \Psi([y]).
    \end{equation}
\end{proposition}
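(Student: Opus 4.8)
The plan is to realize all three isomorphisms written $\Psi$ in \eqref{eq:cechDeRham} through the single explicit formula \eqref{eq:doubleComplexExplicitIsomorphism}, and then to exhibit an explicit $\nabla$-potential relating the cup product of \v{C}ech--holomorphic representatives to the wedge product of their Dolbeault images. First I would fix $D$-closed holomorphic representatives $x$ and $y$ of $[x]$ and $[y]$, of total degrees $n-k$ and $k$ respectively. Since they are holomorphic and $D$-closed we have $\nabla x = \nabla y = 0$, the cup product $[x]\smile[y]$ is represented by the product $xy$ of \eqref{eq:prod2}, which is again holomorphic and $D$-closed, hence $\nabla$-closed, and which lies in $G^{0,n}$. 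Applying \eqref{eq:doubleComplexExplicitIsomorphism} to the double complexes $B^{\bullet,\bullet}$ and $E^{\bullet,\bullet}$ produces elements $\eta_x,\eta_y$ of total degrees $n-k-1,k-1$ and representatives $\bar x$ of $\Psi([x])$ and $\bar y$ of $\Psi([y])$ with $\nabla\eta_x = x-\bar x$ and $\nabla\eta_y = y-\bar y$; for $E^{\bullet,\bullet}$ one may take the explicit choices $\bar y = \sum_j(\dbar v)^j(yR)^j$ and $\eta_y = (-1)^k yU + w(yR)$ coming from the proof of Theorem~\ref{thm:main}. Applying $\nabla$ to these two relations shows that $\bar x$ and $\bar y$ are themselves $\nabla$-closed.

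The central step is a Leibniz computation. I would set $\eta \defeq \eta_x y + (-1)^{n-k}\bar x\eta_y$; all the products here are well defined because the factors $\eta_x$ and $\bar x$ coming from $B^{\bullet,\bullet}$ are valued in smooth forms. Using the antiderivation property \eqref{eq:nablaDerivation} together with $\nabla y = 0$ and $\nabla\bar x = 0$, one obtains
\[
	\nabla(\eta_x y) = (\nabla\eta_x)y = xy - \bar x y, \qquad
	\nabla\big((-1)^{n-k}\bar x\eta_y\big) = \bar x(\nabla\eta_y) = \bar x y - \bar x\bar y,
\]
so that $\nabla\eta = xy - \bar x\bar y$. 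The element $\bar x\bar y$ has \v{C}ech degree $0$: since $\bar y$ is valued in $\Homs(F^0,\omega)$, only the \v{C}ech degree $0$ part of $\bar x$ contributes to the composition in \eqref{eq:prod1}; hence $\bar x\bar y\in G^{n,0}$, and \eqref{eq:doubleComplexExplicitIsomorphism} applied to $G^{\bullet,\bullet}$ gives $\Psi([xy]) = [\bar x\bar y]$.

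It then remains to identify $[\bar x\bar y]$ with $\Psi([x])\wedge\Psi([y])$. The \v{C}ech degree $0$ component of $\bar x$, taken modulo $\im a^0$, is exactly the $\mathcal{F}$-valued $(0,n-k)$-form representing $\Psi([x])\in H^{n-k}(\Hom(\holo_X,\forms^{0,\bullet}(\mathcal{F})))$, and $\bar y$ represents $\Psi([y])\in H^k(\Hom(\mathcal{F},\currs^{n,\bullet}))$. With this understood, $\bar x\bar y$ reduces, by \eqref{eq:prod1} with \v{C}ech and Hom degrees $0$, to composing the morphisms $\holo_X\to\mathcal{F}$ and $\mathcal{F}\to\currs^{n,k}$ and wedging the $(0,n-k)$-form with the $(n,k)$-current -- that is, to the chain-level pairing defining $\wedge$ in \eqref{eq:serrePairing1}. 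Since that chain-level pairing descends to cohomology and is computed on arbitrary representatives, $[\bar x\bar y] = \Psi([x])\wedge\Psi([y])$, and combining with the previous paragraph, $\Psi([x]\smile[y]) = \Psi([xy]) = [\bar x\bar y] = \Psi([x])\wedge\Psi([y])$.

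The hard part will be purely the sign bookkeeping: the products \eqref{eq:prod1}--\eqref{eq:prod2}, the sign $(-1)^p$ in the $\dbar$-operator, and the graded commutativity of the wedge product of currents each contribute signs, so the exponent $n-k$ in the definition of $\eta$, and the final matching of $\bar x\bar y$ with the pairing \eqref{eq:serrePairing1} (which is written with the form-valued factor in front), must be checked carefully against the conventions; one should also pin down the ordering convention used for $xy$ versus the composition of $y\colon F\to\omega$ after $x\colon\holo_X\to F$. None of these checks is conceptually difficult once the conventions are fixed, and they do not affect the structure of the argument.
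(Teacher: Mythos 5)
Your proposal is correct and follows essentially the same route as the paper: both use the explicit isomorphism \eqref{eq:doubleComplexExplicitIsomorphism} to produce $\nabla$-potentials relating the twisted-complex representatives to their Dolbeault images, and then a Leibniz-rule telescoping argument to show that $xy$ and the product of the Dolbeault representatives are $\nabla$-cohomologous, concluding via the $G^{\bullet,\bullet}$ double complex. The only (immaterial) difference is the choice of potential — the paper uses $(-1)^{\deg x} x\beta + \alpha y'$, telescoping through $xy'$, while you use $\eta_x y + (-1)^{n-k}\bar x\eta_y$, telescoping through $\bar x y$ — and the paper is just as brief as you are about the final identification of the chain-level product with the wedge pairing.
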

\begin{proof}
Let $x'$ and $y'$ be representatives of $\Psi([x])$ and $\Psi([y])$. In view of \eqref{eq:doubleComplexCohomologyIsomorphism} and \eqref{eq:doubleComplexExplicitIsomorphism}, there exist elements $\alpha$ and $\beta$ such that $x - x' = \nabla \alpha$ and $y - y' = \nabla \beta$.
Using this, one readily verifies that
\begin{equation*}
	x y - x' y' = \nabla((-1)^{\deg x} x \beta + \alpha y').
\end{equation*}
Thus $x y$ and $x' y'$ are $\nabla$-cohomologous and from this the statement follows.
\end{proof}

\begin{bibdiv}
\begin{biblist}
\bib{And1}{article}{
   author={Andersson, Mats},
   title={Coleff-Herrera currents, duality, and Noetherian operators},
   journal={Bull. Soc. Math. France},
   volume={139},
   date={2011},
   number={4},
   pages={535--554},
}
\bib{AW1}{article}{
   author={Andersson, Mats},
   author={Wulcan, Elizabeth},
   title={Residue currents with prescribed annihilator ideals},
   journal={Ann. Sci. \'{E}cole Norm. Sup. (4)},
   volume={40},
   date={2007},
   number={6},
   pages={985--1007},
}
\bib{AW2}{article}{
   author={Andersson, Mats},
   author={Wulcan, Elizabeth},
   title={Decomposition of residue currents},
   journal={J. Reine Angew. Math.},
   volume={638},
   date={2010},
   pages={103--118},
}
\bib{AW3}{article}{
   author={Andersson, Mats},
   author={Wulcan, Elizabeth},
   title={Direct images of semi-meromorphic currents},
   journal={Ann. Inst. Fourier (Grenoble)},
   volume={68},
   date={2018},
   number={2},
   pages={875--900},
}

\bib{BT}{book}{
   author={Bott, Raoul},
   author={Tu, Loring W.},
   title={Differential forms in algebraic topology},
   series={Graduate Texts in Mathematics},
   volume={82},
   publisher={Springer-Verlag, New York-Berlin},
   date={1982},
}

\bib{CH}{book}{
   author={Coleff, N. R.},
   author={Herrera, M. E.},
   title={Les courants r\'{e}siduels associ\'{e}s \`a une forme m\'{e}romorphe},
   series={Lecture Notes in Mathematics},
   volume={633},
   publisher={Springer, Berlin},
   date={1978},
   pages={x+211},
   isbn={3-540-08651-X},
}

\bib{DS}{article}{
   author={Dickenstein, A.},
   author={Sessa, C.},
   title={Canonical representatives in moderate cohomology},
   journal={Invent. Math.},
   volume={80},
   date={1985},
   number={3},
   pages={417--434},
}

\bib{Eis}{book}{
   author={Eisenbud, David},
   title={Commutative algebra},
   series={Graduate Texts in Mathematics},
   volume={150},
   publisher={Springer-Verlag, New York},
   date={1995},
   pages={xvi+785},
   isbn={0-387-94268-8},
   isbn={0-387-94269-6},
}

\bib{GH}{book}{
   author={Griffiths, Phillip},
   author={Harris, Joseph},
   title={Principles of algebraic geometry},
   series={Wiley Classics Library},
   publisher={John Wiley \& Sons, Inc., New York},
   date={1994},
}

\bib{Gun3}{book}{
   author={Gunning, Robert C.},
   title={Introduction to holomorphic functions of several variables. Vol.
   III},
   series={The Wadsworth \& Brooks/Cole Mathematics Series},
   publisher={Wadsworth \& Brooks/Cole Advanced Books \& Software, Monterey,
   CA},
   date={1990},
}

\bib{Har}{book}{
   author={Hartshorne, Robin},
   title={Algebraic geometry},
   note={Graduate Texts in Mathematics, No. 52},
   publisher={Springer-Verlag, New York-Heidelberg},
   date={1977},
}

\bib{Harvey}{article}{
	author={Harvey, F. Reese},
	title={Integral Formulae Connected by Dolbeault's Isomorphism},
	journal={Rice Univ. Studies},
	volume={56},
	date={1970},
	number={2},
	pages={77--97}
}

\bib{J}{article}{
	author={Johansson, Jimmy},
	title={A residue current associated with a twisting cochain: duality and comparison formula},
	status={Preprint},
	date={2023},
	eprint={arXiv:2306.02458 [math.CV]},
}

\bib{Lar}{article}{
   author={L\"{a}rk\"{a}ng, Richard},
   title={A comparison formula for residue currents},
   journal={Math. Scand.},
   volume={125},
   date={2019},
   number={1},
   pages={39--66},
   issn={0025-5521},
}

\bib{Mal}{book}{
   author={Malgrange, B.},
   title={Ideals of differentiable functions},
   series={Tata Institute of Fundamental Research Studies in Mathematics,
   No. 3},
   publisher={Tata Institute of Fundamental Research, Bombay; Oxford
   University Press, London},
   date={1967},
}

\bib{MalSerre}{article}{
   author={Malgrange, Bernard},
   title={Syst\`emes diff\'{e}rentiels \`a coefficients constants},
   conference={
      title={S\'{e}minaire Bourbaki, 1964, Vol. 8},
   },
   book={
      publisher={Soc. Math. France, Paris},
   },
   date={1995},
   pages={Exp. No. 246, 79--89},
}

\bib{OTT}{article}{
   author={O'Brian, Nigel R.},
   author={Toledo, Domingo},
   author={Tong, Yue Lin L.},
   title={The trace map and characteristic classes for coherent sheaves},
   journal={Amer. J. Math.},
   volume={103},
   date={1981},
   number={2},
   pages={225--252},
   issn={0002-9327},
}

\bib{PTY}{article}{
   author={Passare, Mikael},
   author={Tsikh, August},
   author={Yger, Alain},
   title={Residue currents of the Bochner-Martinelli type},
   journal={Publ. Mat.},
   volume={44},
   date={2000},
   number={1},
   pages={85--117},
   issn={0214-1493},
}

\bib{Serre}{article}{
   author={Serre, Jean-Pierre},
   title={Un th\'{e}or\`eme de dualit\'{e}},
   journal={Comment. Math. Helv.},
   volume={29},
   date={1955},
   pages={9--26},
}

\bib{TT1}{article}{
   author={Toledo, Domingo},
   author={Tong, Yue Lin L.},
   title={Duality and intersection theory in complex manifolds. I},
   journal={Math. Ann.},
   volume={237},
   date={1978},
   number={1},
   pages={41--77},
   issn={0025-5831},
}
\end{biblist}
\end{bibdiv}
\end{document}